\documentclass[11pt, oneside]{article}   	
\usepackage{amsmath, amsthm, amsfonts, amssymb}
\usepackage[colorlinks,citecolor=blue,urlcolor=blue]{hyperref}
\setlength{\textheight}{230mm}
\setlength{\textwidth}{142mm}
\setlength{\topmargin}{-10mm}
\setlength{\oddsidemargin}{10mm}
\setlength{\evensidemargin}{10mm}

\newcommand\Ex{{\mathbb E}}

\newcommand\Prob{{\mathbb P}}

\newcommand\Normal{{\mathcal N}}

\newcommand\cA{{\mathcal A}}

\newcommand\cC{{\mathcal C}}

\newcommand\cP{{\mathcal P}}

\newcommand\N{{\mathbb N}}
\newcommand\Z{{\mathbb Z}}
\newcommand\R{{\mathbb R}}

\newcommand\fX{{\mathfrak X}}


\newcommand\dto{\overset{d}{\to }}

\newcommand\one{{\bf 1}}

\DeclareMathOperator{\Var}{Var}

\DeclareMathOperator{\interior}{int}

\newtheorem{theorem}{Theorem}[section]
\newtheorem{corollary}[theorem]{Corollary}
\newtheorem{lemma}[theorem]{Lemma}

\theoremstyle{definition}

\theoremstyle{remark}
\newtheorem{remark}[theorem]{Remark}


%
\title{On central limit theorems in stochastic geometry}
\author{Khanh Duy Trinh
\footnote{Research Alliance Center for Mathematical Sciences, Tohoku University, Japan. 
Email: trinh.khanh.duy.a3@tohoku.ac.jp} 
}

\begin{document}
\maketitle

\begin{abstract}
We establish central limit theorems for general functionals on binomial point processes and their  Poissonized version. As an application, a central limit theorem for Betti numbers of random geometric complexes in the thermodynamic regime is derived. 

\medskip

	\noindent{\bf Keywords:} central limit theorem, stochastic geometry, thermodynamic regime, strong stabilization, de-Poissonization, Betti numbers
		
\medskip
	
	\noindent{\bf AMS Subject Classification:} Primary 60F05 ; 60D05

\end{abstract}

\section{Introduction}

The paper introduces a new approach to establish central limit theorems (CLT) for  functionals on binomial point processes and Poisson point processes. CLTs in this setting may be found in \cite{PY-2001} for general functionals, and in \cite{BY-2005, PY-2005} for functionals of a specific form. However, the work in \cite{PY-2001} only deals with binomial point processes having uniform distribution and homogeneous Poisson point processes. We are going to remove such restrictions in this paper.

Binomial point processes considered here are $\fX_n = \{X_1, \dots, X_n\}$, where $\{X_i\}_{i = 1}^\infty$ is an i.i.d.~(independent identically distributed) sequence of $\R^d$-valued random variables having probability density function $f$. The function $f$ is assumed to be bounded and to have compact support. Associated with $\{\fX_n\}$ is the Poissonized version $\cP_n = \{X_1, \dots, X_{N_n}\}$ which becomes a Poisson point process with intensity function $nf$. Here  the random variable $N_n$ has Poisson distribution with parameter $n$ and is independent of $\{X_i\}$. By a functional, it means a real-valued measurable function $H$ defined on all finite subsets in $\R^d$. We will study CLTs for $H(n^{1/d}\fX_n)$ and $H(n^{1/d}\cP_n)$ as $n$ tends to infinity.

Let us first introduce the result in \cite{PY-2001}. Assume that $X_i$ is uniformly distributed on some bounded set $S$, or equivalently $f(x) \equiv \lambda$ on $S$. The support $S$ may need some technical assumption. In this case, the point process $n^{1/d}\cP_n$ has the same distribution with the restriction  on $n^{1/d}S$ of a homogeneous Poisson point process $\cP(\lambda)$ with intensity $\lambda$. Then a CLT holds for $H(n^{1/d}\cP_n)$, that is, $n^{-1/2}(H(n^{1/d}\cP_n) - \Ex[H(n^{1/d}\cP_n)])$ converges in distribution to a Gaussian distribution with mean $0$ and variance $\sigma^2 \ge 0$, provided that the functional $H$ is weakly stabilizing and satisfies a bounded moment condition. Here the concept of stabilization is defined via the add one cost function associated with $H$, $D_0(\cdot) = H(\cdot \cup \{0\}) - H(\cdot)$, which measures the increment of $H$ by adding a point at the origin. (The precise definition will be given in Section 3.2.) This approach is based on the martingale difference central limit theorem as one may expect due to a spatial independence property of Poisson point processes. A CLT for $H(n^{1/d} \fX_n)$ is then derived by a de-Poissonization technique in which a stronger condition, called strong stabilization, is required. Roughly speaking, $H$ is strongly stabilizing if the value of $D_0$ on $\cP(\lambda)$ does not change when  adding or removing points far from the origin. Although some techniques had been developed in \cite{Kesten-Lee-1996, Lee-1997, Lee-1999}, the paper \cite{PY-2001} is the first one successfully dealing with  general functionals. Since then,  it has found many applications.

For the non-uniform distributions case, that martingale-based approach has been shown to work for some specific functionals (eg.~the component count in geometric graph \cite[Section~13.7]{Penrose-book} and functionals related to Euclidean minimal spanning trees \cite{Lee-1999}). To the best knowledge of the author, there is no general result like \cite{PY-2001} yet. In this paper, we develop a new fundamental approach to derive CLTs for functionals which are assumed to be strongly stabilizing on $\cP(\lambda)$ for all $0 \le \lambda  \le \sup f(x)$. (Some additional bounded moments conditions are needed.) Note that we impose the strong stabilization on homogeneous Poisson point processes only. This condition is very mild in the sense that it is also a necessary condition for  the well-established de-Poissonization technique in \cite[Section~2.5]{Penrose-book}.

It is worth mentioning another direction in the study of the limiting behavior of $H(n^{1/d} \fX_n)$ and $H(n^{1/d}\cP_n)$. In this direction, assume that the functional $H$ can be expressed in the following form 
\[
	H(\fX) = \sum_{x \in \fX} \xi(x; \fX),	\quad \fX\subset \R^d\text{: finite subset},
\] 
where $\xi(x; \fX)$ is a local (or stabilizing) function. (The stabilization of $\xi(x; \fX)$ has the same meaning with the strong stabilization of $D_0$.) Under some more conditions on the tail of stabilization radii, laws of large numbers and central limit theorems have been established \cite{BY-2005, Penrose-2007, PY-2003, PY-2005}. An explicit expression for the limiting variance and a rate of convergence in CLTs have been also known.  We need not to compare those results with ours because the scope is different.

The paper is organized as follows. Section 2 introduces some probabilistic ingredients. CLTs for homogeneous Poisson point processes, non-homogeneous Poisson point processes and binomial point processes are established in turn in Section 3. A partial result on CLT for Betti numbers in the thermodynamic regime, as an application of the general theory, is discussed in Section 4.

\section{Probabilistic ingredients}
This section introduces several useful results needed in this paper.
\subsection{CLT for triangular arrays}
The following is an easy consequence of Lyapunov's central limit theorem.
\begin{theorem}\label{thm:CLT-Lyapunov}
For each $n$, let $\{\xi_{n, i}\}_{i = 1}^{\ell_n}$ be a sequence of independent real random variables. Here we require that $\ell_n \le c n$ for some constant $c>0$. Assume that
\begin{itemize}
\item[\rm(i)] $\lim_{n \to \infty} \frac{1}{n} \sum_{i = 1}^{\ell_n} \Var[\xi_{n, i}] = \sigma^2 \in [0, \infty);$
\item[\rm(ii)] for some $\delta > 0$,
$
	\sup_n\sup_{i} \Ex[|\xi_{n, i}|^{2+\delta}] < \infty.
$ 
\end{itemize}
Then 
\[
	\frac{1}{\sqrt{n}} \sum_{i = 1}^{\ell_n} \Big( \xi_{n, i} - \Ex[\xi_{n, i}]\Big) \dto \Normal(0, \sigma^2) \text{ as } n \to \infty.
\]
Here `$\dto$' denotes the convergence in distribution, and $\Normal(0, \sigma^2)$ denotes the Gaussian distribution with mean zero and variance $\sigma^2$.
\end{theorem}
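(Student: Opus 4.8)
The plan is to reduce directly to the classical Lyapunov central limit theorem for triangular arrays, isolating the degenerate case $\sigma^2 = 0$ beforehand. Write $S_n = \frac{1}{\sqrt n}\sum_{i=1}^{\ell_n}\big(\xi_{n, i} - \Ex[\xi_{n, i}]\big)$ and $s_n^2 := \Var[S_n] = \frac{1}{n}\sum_{i=1}^{\ell_n}\Var[\xi_{n, i}]$, which by hypothesis (i) tends to $\sigma^2$. If $\sigma^2 = 0$, then $\Ex[S_n^2] = s_n^2 \to 0$, so $S_n \to 0$ in $L^2$ and hence in probability; convergence in probability to the constant $0$ forces convergence in distribution to $\Normal(0, 0)$, which is the assertion. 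So from now on assume $\sigma^2 > 0$.

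Set $\eta_{n, i} := n^{-1/2}\big(\xi_{n, i} - \Ex[\xi_{n, i}]\big)$, so that for each fixed $n$ the variables $\eta_{n, i}$, $1 \le i \le \ell_n$, are independent and centered, with $\sum_{i=1}^{\ell_n}\Var[\eta_{n, i}] = s_n^2 \to \sigma^2 > 0$ and $\sum_{i=1}^{\ell_n}\eta_{n, i} = S_n$. It then suffices to verify the Lyapunov condition with exponent $2 + \delta$. By the $c_r$-inequality and then Jensen's inequality,
\[
\Ex\big[|\xi_{n, i} - \Ex[\xi_{n, i}]|^{2+\delta}\big] \le 2^{1+\delta}\Big(\Ex\big[|\xi_{n, i}|^{2+\delta}\big] + |\Ex[\xi_{n, i}]|^{2+\delta}\Big) \le 2^{2+\delta}\,\Ex\big[|\xi_{n, i}|^{2+\delta}\big],
\]
which by hypothesis (ii) is bounded by a constant $C$ uniformly in $n$ and $i$. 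Using $\ell_n \le cn$ this gives
\[
\sum_{i=1}^{\ell_n}\Ex\big[|\eta_{n, i}|^{2+\delta}\big] = n^{-(2+\delta)/2}\sum_{i=1}^{\ell_n}\Ex\big[|\xi_{n, i} - \Ex[\xi_{n, i}]|^{2+\delta}\big] \le c\,C\, n^{-\delta/2} \longrightarrow 0,
\]
and since $s_n \to \sigma > 0$ the normalized Lyapunov ratio $s_n^{-(2+\delta)}\sum_{i=1}^{\ell_n}\Ex[|\eta_{n, i}|^{2+\delta}]$ also tends to $0$. Lyapunov's theorem then gives $S_n / s_n \dto \Normal(0, 1)$, and Slutsky's theorem together with $s_n \to \sigma$ upgrades this to $S_n \dto \Normal(0, \sigma^2)$.

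The argument is routine and I do not expect a genuine obstacle. The two points that need a little care are the degenerate case $\sigma^2 = 0$, where the normalized Lyapunov statement does not apply directly and one instead argues via $L^2$-convergence, and keeping the $(2+\delta)$-moment bound uniform after centering, so that the crude count $\ell_n \le cn$ is enough to send the Lyapunov ratio to $0$.
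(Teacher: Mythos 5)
Your proof is correct and follows exactly the route the paper intends: the paper gives no proof, stating only that the theorem is "an easy consequence of Lyapunov's central limit theorem," and your argument (Lyapunov condition via the uniform $(2+\delta)$-moment bound and $\ell_n \le cn$, plus Slutsky, with the degenerate case $\sigma^2=0$ handled separately by Chebyshev) is precisely that easy consequence, carefully written out.
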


We will need the following result which may be found somewhere in literature.  
\begin{lemma}\label{lem:CLT-triangle}
Let $\{Y_n\}_{n=1}^\infty$ and $\{X_{n,k}\}_{n,k=1}^\infty$ be mean zero real random variables. Assume that
\begin{itemize}
	\item[\rm(i)] for each $k$, as $n \to \infty$,
		$	X_{n,k} \dto \Normal(0, \sigma_k^2),$ and $\Var[X_{n, k}] \to \sigma_k^2$;
	\item[\rm(ii)] 
		$
			\lim_{k \to \infty} \limsup_{n \to \infty} \Var[X_{n,k} - Y_n] =0.
		$
\end{itemize}
Then the limit $\sigma^2 = \lim_{k \to \infty} \sigma_k^2$ exists, and as $n \to \infty$,
\[
 	Y_n \dto \Normal(0, \sigma^2), \quad \Var[Y_n] \to \sigma^2.
\]
\end{lemma}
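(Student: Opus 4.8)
The plan is to regard $\{X_{n,k}\}_k$ as an $L^2$-approximation of $Y_n$ and to run a standard ``converging together'' argument at the level of characteristic functions, using the $L^2$ control in hypothesis (ii) to dominate the difference of characteristic functions. Throughout write $\|Z\|_2 = (\Ex[Z^2])^{1/2}$, and set $a_k := \limsup_{n\to\infty}\Var[X_{n,k}-Y_n]$, which by (ii) tends to $0$ as $k\to\infty$ and is in particular finite for all large $k$. Since all the random variables are centered, $\|X_{n,k}-Y_n\|_2 = \sqrt{\Var[X_{n,k}-Y_n]}$, $\|X_{n,k}\|_2=\sqrt{\Var[X_{n,k}]}$, and $\limsup_{n}\|X_{n,k}-Y_n\|_2 = \sqrt{a_k}$.

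First I would show that $\sigma_k^2$ converges. For fixed $k,l$, the triangle inequality in $L^2$ gives $\|X_{n,k}-X_{n,l}\|_2 \le \|X_{n,k}-Y_n\|_2 + \|Y_n-X_{n,l}\|_2$, while the reverse triangle inequality together with $\Var[X_{n,k}]\to\sigma_k^2$ and $\Var[X_{n,l}]\to\sigma_l^2$ gives $\lim_{n}\big|\,\|X_{n,k}\|_2 - \|X_{n,l}\|_2\,\big| = |\sigma_k-\sigma_l|$. Taking $\limsup_{n}$ through the first inequality yields $|\sigma_k-\sigma_l|\le \sqrt{a_k}+\sqrt{a_l}$, so $\{\sigma_k\}$ is Cauchy in $[0,\infty)$; hence $\sigma_k\to\sigma$ for some $\sigma\ge 0$ and $\sigma_k^2\to\sigma^2$.

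Next, for the distributional statement I would compare characteristic functions $\phi_n(t)=\Ex[e^{itY_n}]$ and $\psi_{n,k}(t)=\Ex[e^{itX_{n,k}}]$. The elementary bound $|e^{ia}-e^{ib}|\le|a-b|$ and Jensen's inequality give $|\phi_n(t)-\psi_{n,k}(t)|\le |t|\,\Ex[|Y_n-X_{n,k}|]\le |t|\,\sqrt{\Var[Y_n-X_{n,k}]}$, hence $\limsup_{n}|\phi_n(t)-\psi_{n,k}(t)|\le |t|\sqrt{a_k}$. For each fixed $k$, hypothesis (i) and the Lévy continuity theorem give $\psi_{n,k}(t)\to e^{-\sigma_k^2 t^2/2}$ as $n\to\infty$, and $e^{-\sigma_k^2 t^2/2}\to e^{-\sigma^2 t^2/2}$ as $k\to\infty$. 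Combining the three estimates via the triangle inequality, for every $k$ one gets $\limsup_{n}|\phi_n(t)-e^{-\sigma^2 t^2/2}|\le |t|\sqrt{a_k}+|e^{-\sigma_k^2 t^2/2}-e^{-\sigma^2 t^2/2}|$; letting $k\to\infty$ forces $\phi_n(t)\to e^{-\sigma^2 t^2/2}$ for all $t$, so $Y_n\dto\Normal(0,\sigma^2)$ by Lévy's theorem again.

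Finally, for $\Var[Y_n]\to\sigma^2$ I would use the $L^2$ triangle inequality once more: $\big|\,\|Y_n\|_2-\sigma_k\,\big|\le \|Y_n-X_{n,k}\|_2 + \big|\,\|X_{n,k}\|_2-\sigma_k\,\big|$, so taking $\limsup_{n}$ gives $\limsup_{n}\big|\sqrt{\Var[Y_n]}-\sigma_k\big|\le \sqrt{a_k}$, and sending $k\to\infty$ (so $\sigma_k\to\sigma$, $\sqrt{a_k}\to0$) squeezes $\sqrt{\Var[Y_n]}\to\sigma$. There is no deep obstacle; the only point demanding care is the order of limits, since every application of (i)—the convergence of $\psi_{n,k}$ and of $\Var[X_{n,k}]$—holds for a \emph{fixed} $k$ as $n\to\infty$, so one must first take $\limsup_{n}$ with $k$ frozen and only afterwards send $k\to\infty$; this interchange is precisely what the double-limit hypothesis (ii) licenses.
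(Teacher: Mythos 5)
Your proposal is correct and follows essentially the same route as the paper: the reverse triangle inequality in $L^2$ for the variance statement and the bound $|\Ex[e^{itX_{n,k}}]-\Ex[e^{itY_n}]|\le |t|\,\Var[X_{n,k}-Y_n]^{1/2}$ for the distributional one. The only (welcome) difference is that you establish the existence of $\lim_k\sigma_k^2$ explicitly via a Cauchy argument, a point the paper leaves implicit in its squeeze of $\Var[Y_n]^{1/2}$.
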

\begin{proof}
It follows from the triangular inequality that 
\begin{align*}
	\Var[X_{n, k}]^{1/2} - \Var[X_{n, k} - Y_n]^{1/2} &\le \Var[Y_n]^{1/2} \\
	&\qquad\le \Var[X_{n, k}]^{1/2} + \Var[X_{n, k} - Y_n]^{1/2}.
\end{align*}
By letting $n \to \infty$ first, and then let $k \to \infty$ in the above inequalities, we see that 
\[
	\lim_{n \to \infty} \Var[Y_n] =  \lim_{k \to \infty} \sigma_k^2 =: \sigma^2. 
\]

Let $t\in \R$ be fixed. By the assumption (i), for each $k$,
\[
	\lim_{n \to \infty} \Ex[e^{it X_{n, k}}] = e^{-\sigma_k^2 t^2/2}.
\]
Similarly as above, it follows from the inequality 
\[
	|\Ex[e^{it X_{n, k}}] - \Ex[e^{it Y_n}]| \le |t|\Ex[|X_{n, k} - Y_n|] \le |t|\Var[X_{n, k} - Y_n]^{1/2},
\]
that 
\[
	\lim_{n \to \infty} \Ex[e^{it Y_n}] = \lim_{k \to \infty} \lim_{n \to \infty}  \Ex[e^{it X_{n, k}}] = e^{-\sigma^2 t^2/2}.
\]
Therefore $Y_n \dto \Normal(0, \sigma^2)$ as desired. The proof is complete.
\end{proof}

\subsection{Poisson point processes}
Let $f(x) \ge 0$ be a locally integrable function on $\R^d$. A Poisson point process with intensity function $f$ is a point process $\cP$ on $\R^d$ which satisfies the following conditions 
\begin{itemize}
	\item[(i)] for any bounded Borel set $A$, the number of points inside $A$, denoted by $\cP(A)$, has Poisson distribution with parameter $(\int_A f(x) dx)$;
	
	\item[(ii)] for disjoint Borel sets $A_1, \dots, A_k$, the random variables $\cP(A_1), \dots, \cP(A_k)$ are independent.
\end{itemize}	

A Poisson point process with the intensity function $f(x)$ identically equal to a constant $\lambda \ge 0$ is called a homogeneous Poisson point process with density $\lambda$.

We need the following result on the convergence of functional on Poisson point processes. Recall that a functional $H$ is a real-valued measurable function defined on all finite subsets in $\R^d$. 
\begin{lemma}\label{lem:L1-convergence}
Let $\{f_n\}_{n = 1}^\infty$ and $f$ be non-negative integrable functions defined on a bounded Borel set $W$. Assume that the sequence $\{f_n\}$ converges to $f$ in $L^1(W)$, that is, $\int_W |f_n(x) - f(x)|dx \to 0$ as $n \to \infty$. Then for any functional $H$, 
\[
	H(\cP(f_n)) \dto H(\cP(f)) \text{ as } n \to \infty. 
\]
Here $\cP(f_n)$ (resp.~$\cP(f)$) denotes a Poisson point process with intensity function $f_n$ (resp.~$f$).
\end{lemma}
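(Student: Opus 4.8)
The plan is to realize all the Poisson point processes on a common probability space via a coupling, so that $H(\cP(f_n))$ and $H(\cP(f))$ actually coincide with high probability. To do this, introduce an auxiliary homogeneous Poisson point process $\cP(M)$ on $W$, where $M \ge \sup_n \|f_n\|_\infty \vee \|f\|_\infty$ — or more robustly, since only $L^1$ convergence is assumed and the $f_n$ need not be uniformly bounded, first replace $f_n$ by $f_n \wedge M$ and argue that the truncation is negligible; I address this point below. Mark each point $x$ of $\cP(M)$ with an independent uniform random variable $U_x$ on $[0,1]$, and let $\cP(g) := \{x \in \cP(M) : U_x \le g(x)/M\}$ for any measurable $g \le M$. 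By the thinning property of Poisson processes, $\cP(g)$ is a Poisson point process on $W$ with intensity $g$, so this simultaneously couples $\cP(f_n)$ and $\cP(f)$ for all $n$.

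The key estimate is then that, under this coupling, $\Prob(\cP(f_n) \ne \cP(f)) \to 0$. Indeed, the two configurations differ only at points $x \in \cP(M)$ whose mark $U_x$ lies between $f_n(x)/M$ and $f(x)/M$; conditionally on $\cP(M)$, the expected number of such points is $\frac{1}{M}\sum_{x \in \cP(M)} |f_n(x) - f(x)|$, whose expectation over $\cP(M)$ equals $\frac{1}{M}\int_W |f_n(x) - f(x)|\,M\,dx = \int_W |f_n(x) - f(x)|\,dx \to 0$ by hypothesis. Hence the expected number of discrepancy points tends to $0$, so by Markov's inequality $\Prob(\cP(f_n) \ne \cP(f)) \to 0$. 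Since $H$ is a fixed measurable function of the configuration, $\{\cP(f_n) = \cP(f)\}$ forces $H(\cP(f_n)) = H(\cP(f))$, and convergence in probability (a fortiori in distribution) follows: for any bounded continuous $\varphi$, $|\Ex[\varphi(H(\cP(f_n)))] - \Ex[\varphi(H(\cP(f)))]| \le 2\|\varphi\|_\infty \Prob(\cP(f_n) \ne \cP(f)) \to 0$.

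The main obstacle is the lack of a uniform $L^\infty$ bound on the $f_n$: the clean thinning coupling above needs a common dominating constant $M$. To handle this, fix a large constant $M$ and split $f_n = (f_n \wedge M) + (f_n - M)_+$, similarly for $f$. Run the coupling with the truncated densities $f_n \wedge M$ and $f \wedge M$, which are dominated by $M$; the discrepancy bound above still goes through with $\int_W|f_n \wedge M - f \wedge M|\,dx \le \int_W |f_n - f|\,dx \to 0$. The contribution of the overflow parts is controlled because $\int_W (f - M)_+\,dx \to 0$ as $M \to \infty$ (as $f$ is integrable), and $\int_W (f_n - M)_+\,dx \le \int_W (f - M)_+ \,dx + \int_W |f_n - f|\,dx$, so uniformly in $n$ the overflow mass can be made arbitrarily small by taking $M$ large; adding back the independent overflow points of $\cP((f - M)_+)$ etc.\ perturbs the configuration by a number of points with small expectation. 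One then invokes Lemma~\ref{lem:CLT-triangle} in spirit — or more directly a diagonal/three-epsilons argument: approximate $H(\cP(f))$ by $H(\cP(f \wedge M))$, which $\cP((f-M)_+)$ being empty (probability $\to 1$ as $M \to \infty$) makes exact, and likewise for $f_n$, uniformly in $n$. Passing $n \to \infty$ then $M \to \infty$ yields $H(\cP(f_n)) \dto H(\cP(f))$. If one is content to assume, as is the case in all later applications in this paper where $f$ and $f_n$ are uniformly bounded, the truncation step is unnecessary and the proof reduces to the two paragraphs above.
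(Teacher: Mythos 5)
Your proof is correct and is essentially the paper's argument: both construct a coupling under which $\cP(f_n)$ and $\cP(f)$ coincide except on an event whose probability is controlled by $\int_W |f_n(x)-f(x)|\,dx$, so that $H(\cP(f_n))=H(\cP(f))$ with probability tending to one. The only difference is cosmetic but worth noting: the paper realizes the coupling via a unit-intensity Poisson process on $W\times[0,\infty)$ restricted to the subgraph regions $\{t\le f_n(x)\}$ and $\{t\le f(x)\}$, which is exactly your thinning construction with ``$M=\infty$'' and therefore requires no uniform bound on the densities, making your entire truncation paragraph unnecessary.
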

\begin{proof}
	We use the following coupling. Let $\Phi$ be a homogeneous Poisson point process with density $1$ on $W \times [0, \infty)$. Let 
	\begin{align*}
		A_n &= \{(x, t) \in W \times [0, \infty) : t \le f_n(x)\},\\  
		A &= \{(x, t) \in W \times [0, \infty) : t\le f(x)\}.
	\end{align*}
Let $\cP_n$ (resp.~$\cP$) be the projection of the point process $\Phi|_{A_n}$ (resp.~$\Phi|_{A}$) onto $W$. Then $\cP_n$ (resp.~$\cP$)  becomes a Poisson point process with intensity function $f_n$ (resp.~$f$). 

Let $B_n = \{(x, t) \in W \times [0, \infty) : f(x)\wedge f_n(x) < t \le f(x) \vee f_n(x)\}$. Then $\cP_n \equiv \cP$, if and only if there is no point of $\Phi$ on $B_n$. Thus 
\[
	\Prob(\cP_n \equiv \cP) = \Prob(\Phi(B_n) = 0) =  \exp(-\int_W|f_n(x) - f(x)|dx ).
\]
Consequently, as $n \to \infty$, 
\[
	\Prob(H(\cP_n) = H(\cP)) \ge \Prob(\cP_n \equiv \cP) = \exp(-\int_W|f_n(x) - f(x)|dx) \to 1.
\]
It follows that on this realization, $H(\cP_n)$ converges in probability to $H(\cP)$. Therefore, $H(\cP(f_n))$ converges in distribution to $H(\cP(f))$. The proof is complete. 
\end{proof}

The functional $H$ is said to be \emph{translation-invariant} if $H(y+\fX) = H(\fX)$ for all finite subsets $\fX\subset \R^d$ and all $y \in \R^d$, where $y + \fX = \{y + x : x \in \fX\}$. For translation-invariant functional, Poisson point processes do not need to be defined on the same region. Consequently, we have:
\begin{corollary}\label{cor:L1-convergence}
	Let $H$ be a translation-invariant functional. Let $W \subset \R^d$ be a bounded Borel set. Assume that $\int_{W_n}|f_n(x) - \lambda| dx \to 0$ as $n\to \infty$, where $\{f_n\}$ are non-negative functions defined on $W_n = y_n + W$, and $\lambda \ge 0$ is a constant. Then 
	\[
		H(\cP(f_n)) \dto H(\cP(\lambda)|_W) \text{ as } n \to \infty.
	\]
Here $\cP(\lambda)|_W$ denotes the restriction on $W$ of a homogeneous Poisson point process $\cP(\lambda)$ with density $\lambda$.  

Assume further that for some $\delta > 0$,
	$
		\sup_n \Ex[|H(\cP(f_n))|^{2+ \delta}] < \infty.
	$
Then as $n \to \infty$,
\[
\Ex[H(\cP(f_n))] \to \Ex[H(\cP(\lambda)|_W)], \quad \Var[H(\cP(f_n))] \to \Var[H(\cP(\lambda)|_W)].
\]
\end{corollary}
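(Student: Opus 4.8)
The plan is to reduce the distributional assertion to Lemma~\ref{lem:L1-convergence} by exploiting translation invariance, and then to upgrade this to convergence of the first two moments via uniform integrability.

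For the distributional statement, I would set $\tilde f_n(x) = f_n(x + y_n)$ for $x \in W$, so that $\{\tilde f_n\}$ is a sequence of non-negative functions on the \emph{fixed} bounded Borel set $W$ with $\int_W |\tilde f_n(x) - \lambda|\,dx = \int_{W_n}|f_n(x) - \lambda|\,dx \to 0$; in particular $\tilde f_n \to \lambda$ in $L^1(W)$. Lemma~\ref{lem:L1-convergence}, applied with limiting intensity identically equal to $\lambda$ on $W$, then gives $H(\cP(\tilde f_n)) \dto H(\cP(\lambda)|_W)$. On the other hand, if $\cP(\tilde f_n)$ is realized as a Poisson point process with intensity $\tilde f_n$ on $W$, then $y_n + \cP(\tilde f_n)$ is a Poisson point process with intensity $f_n$ on $W_n$, so by translation invariance $H(\cP(f_n)) \overset{d}{=} H\big(y_n + \cP(\tilde f_n)\big) = H(\cP(\tilde f_n))$. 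Combining the two facts yields $H(\cP(f_n)) \dto H(\cP(\lambda)|_W)$.

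For the moment statements, I would note that the hypothesis $\sup_n \Ex[|H(\cP(f_n))|^{2+\delta}] < \infty$ makes both families $\{H(\cP(f_n))\}_n$ and $\{H(\cP(f_n))^2\}_n$ uniformly integrable (the $(2+\delta)$-th moment bound controls the $(1+\delta/2)$-th moment of the squares). Since convergence in distribution together with uniform integrability implies convergence of expectations, we obtain $\Ex[H(\cP(f_n))] \to \Ex[H(\cP(\lambda)|_W)]$ and $\Ex[H(\cP(f_n))^2] \to \Ex[H(\cP(\lambda)|_W)^2]$; here the limit $H(\cP(\lambda)|_W)$ itself has a finite $(2+\delta)$-th moment by lower semicontinuity of $h \mapsto \Ex[|h|^{2+\delta}]$ under weak convergence (equivalently, by a Skorokhod coupling together with Fatou's lemma). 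Finally, writing $\Var[H(\cP(f_n))] = \Ex[H(\cP(f_n))^2] - (\Ex[H(\cP(f_n))])^2$, we conclude $\Var[H(\cP(f_n))] \to \Ex[H(\cP(\lambda)|_W)^2] - (\Ex[H(\cP(\lambda)|_W)])^2 = \Var[H(\cP(\lambda)|_W)]$.

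I do not anticipate a serious obstacle: the only points requiring care are that Lemma~\ref{lem:L1-convergence} is stated for intensities on a common domain, which forces the translation normalization $\tilde f_n = f_n(\cdot + y_n)$ above, and that the $(2+\delta)$-th moment bound must be transferred to the limiting random variable before the statement ``$\Var[H(\cP(\lambda)|_W)]$'' is even meaningful. Both are routine.
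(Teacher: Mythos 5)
Your proof is correct and follows essentially the same route as the paper, which simply states that the first claim follows from Lemma~\ref{lem:L1-convergence} by translation invariance and that the moment convergence is the standard uniform-integrability corollary to weak convergence (Billingsley, after Theorem~25.12). You have merely filled in the details (the recentering $\tilde f_n = f_n(\cdot + y_n)$ and the transfer of the $(2+\delta)$-moment bound to the limit) that the paper leaves implicit.
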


\begin{proof}
Since the functional $H$ is translation-invariant, the first statement follows directly from the previous lemma. The second statement is a standard result in probability theory, (for example, see the corollary following Theorem 25.12 in \cite{Billingsley}). 
\end{proof}

Next, we introduce  the so-called Poincar\'e inequality for the variance of Poisson functional, an essential tool in this paper. Let $\cP$ be a Poisson point process with intensity function $f$. Assume that $\int f(x) dx < \infty$. Then almost surely, $\cP$ has finitely many points. For a functional $H$, define an add one cost function as
\[
	D_x(\fX) = H(\fX \cup \{x\}) - H(\fX).
\]
Then \cite[Eq.~1.8]{Last-Penrose-2011}
\begin{equation}\label{Poincare}
	\Var[H(\cP)] \le \Ex\bigg[ \int |D_x(\cP)|^2 f(x) dx \bigg] = \int \Ex[|D_x(\cP)|^2] f(x) dx.
\end{equation}

\section{Central limit theorems}

\subsection{Homogeneous Poisson point processes}
From now on, assume that the functional $H$ is translation-invariant. Let $\cP(f)$ (resp.~$\cP(\lambda)$) denote a Poisson point process with intensity function $f$ (resp.~homogeneous Poisson point process with density $\lambda$).

The functional $H$ is said to be \emph{weakly stabilizing} on $\cP(\lambda)$ if there is  a (finite) random variable $\Delta(\lambda)$ such that 
\[
	D_0 (\cP(\lambda)|_{V_n}) \to \Delta(\lambda), \text{almost surely},
\]
for any sequence $\{V_n \ni 0\}_{n = 1}^\infty$ of cubes which tends to $\R^d$ as $n \to \infty$. Here a cube means a subset in $\R^d$ of the form $y + [0, a)^d$.

\begin{theorem}\label{thm:homo}
	Assume that the functional $H$ is weakly stabilizing on $\cP(\lambda)$. Assume further that for some $p > 2$,
			\begin{equation}\label{p-moment}
				\sup_{0 \in W\text{:cube}}\Ex[|D_0 (\cP(\lambda)|_{W})|^p] <\infty.
			\end{equation}
	Then as $n \to \infty$,
	\[
		\frac{H(\cP_n(\lambda)) - \Ex[H(\cP_n(\lambda))]}{\sqrt{n}} \dto \Normal(0, \hat \sigma^2(\lambda)), \quad  \frac{\Var[H(\cP_n(\lambda))]}{n} \to \hat \sigma^2(\lambda).
	\]
	Here $\cP_n(\lambda) = \cP(\lambda)|_{[0, n^{1/d})^d}$, and $n$ is not necessary an integer number.
\end{theorem}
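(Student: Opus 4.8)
The plan is to combine the classical orthogonal (martingale-difference) decomposition of $H(\cP_n(\lambda))$ over a unit-cube partition of $C_n:=[0,n^{1/d})^d$ with the spatial localization afforded by weak stabilization, in such a way that the Gaussian limit can be read off from Theorem~\ref{thm:CLT-Lyapunov} and Lemma~\ref{lem:CLT-triangle} alone, i.e.\ without invoking a martingale central limit theorem. Throughout, the translation invariance of $H$ and the homogeneity of $\cP(\lambda)$ are what make the truncated contributions stationary.

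\emph{Decomposition and moments.} Enumerate the (at most $2^d n$) unit cubes $Q_{n,1},\dots,Q_{n,\ell_n}$ meeting $C_n$, set $\F_{n,i}=\sigma\big(\cP(\lambda)\cap(Q_{n,1}\cup\cdots\cup Q_{n,i})\big)$ and $D_{n,i}=\Ex[H(\cP_n(\lambda))\mid\F_{n,i}]-\Ex[H(\cP_n(\lambda))\mid\F_{n,i-1}]$, so that $\{D_{n,i}\}$ is a finite martingale difference sequence with
\[
 H(\cP_n(\lambda))-\Ex[H(\cP_n(\lambda))]=\sum_{i=1}^{\ell_n}D_{n,i},\qquad \Var[H(\cP_n(\lambda))]=\sum_{i=1}^{\ell_n}\Ex[D_{n,i}^2].
\]
Writing $\cP_n^{(i)}$ for $\cP_n(\lambda)$ with the points in $Q_{n,i}$ deleted and $\Delta_{n,i}=H(\cP_n(\lambda))-H(\cP_n^{(i)})$, independence of the cubes gives $D_{n,i}=\Ex[\Delta_{n,i}\mid\F_{n,i}]-\Ex[\Delta_{n,i}\mid\F_{n,i-1}]$, hence $\Ex|D_{n,i}|^p\le 2^p\,\Ex|\Delta_{n,i}|^p$. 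Expanding $\Delta_{n,i}$ as a telescoping sum of add-one costs over the Poisson-many points of $Q_{n,i}$ and using \eqref{p-moment} together with translation invariance (a Minkowski/Rosenthal estimate absorbs the random number of summands) yields $\sup_n\sup_i\Ex|D_{n,i}|^{p'}<\infty$ for some $p'>2$; in particular $\{D_{n,i}^2\}$ is uniformly integrable.

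\emph{Localization.} Fix $k$, let $B$ be the closed $\ell^\infty$-neighbourhood of radius $k$ around $Q_{n,i}$, put $\Delta_{n,i}^{(k)}=H(\cP(\lambda)\cap B)-H\big((\cP(\lambda)\cap B)\setminus Q_{n,i}\big)$ and $D_{n,i}^{(k)}=\Ex[\Delta_{n,i}^{(k)}\mid\F_{n,i}]-\Ex[\Delta_{n,i}^{(k)}\mid\F_{n,i-1}]$. Then $\{D_{n,i}^{(k)}\}$ and $\{D_{n,i}-D_{n,i}^{(k)}\}$ are again martingale differences (hence orthogonal), $D_{n,i}^{(k)}$ depends on $\cP(\lambda)$ only through $B$, and by weak stabilization together with the uniform integrability from the previous step one obtains $\sup_i\Ex[(D_{n,i}-D_{n,i}^{(k)})^2]\to 0$ as $k\to\infty$, the supremum running over cubes at $\ell^\infty$-distance $\ge k$ from $\partial C_n$; the remaining $O(n^{(d-1)/d})=o(n)$ boundary cubes contribute $o(1)$ to the normalised variance. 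This step is the crux and the main obstacle: one has to upgrade the almost-sure, single-point content of weak stabilization to a \emph{uniform} $L^2$-control of $D_{n,i}-D_{n,i}^{(k)}$, and one must check that the partial point configurations produced inside the conditional expectations are still governed by \eqref{p-moment} through translation invariance — this is precisely where the moment hypothesis does its work.

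\emph{Conclusion.} Put $Y_n=n^{-1/2}\big(H(\cP_n(\lambda))-\Ex[H(\cP_n(\lambda))]\big)$ and $X_{n,k}=n^{-1/2}\sum_i D_{n,i}^{(k)}$ (sum over interior cubes). By orthogonality of the $D_{n,i}^{(k)}$ and stationarity away from $\partial C_n$, $\Var[X_{n,k}]=\frac1n\sum_i\Ex[(D_{n,i}^{(k)})^2]\to\sigma_k^2$ for an explicit per-unit-volume constant $\sigma_k^2$. Since the array $\{D_{n,i}^{(k)}\}_i$ is $O(k)$-dependent, mean zero, stationary away from the boundary and uniformly bounded in $L^{p'}$ with $p'>2$, a standard big-block–small-block (Bernstein) reduction — feeding the independent block sums into Theorem~\ref{thm:CLT-Lyapunov} and letting the block size tend to infinity through Lemma~\ref{lem:CLT-triangle} — gives $X_{n,k}\dto\Normal(0,\sigma_k^2)$. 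Finally, by the orthogonality established in the localization step, $\limsup_{n\to\infty}\Var[X_{n,k}-Y_n]=\limsup_{n\to\infty}\frac1n\sum_i\Ex[(D_{n,i}-D_{n,i}^{(k)})^2]\to 0$ as $k\to\infty$, so one last application of Lemma~\ref{lem:CLT-triangle} yields $Y_n\dto\Normal(0,\hat\sigma^2(\lambda))$ with $\hat\sigma^2(\lambda)=\lim_k\sigma_k^2$, together with $\Var[Y_n]\to\hat\sigma^2(\lambda)$. The boundary accounting and the Bernstein reduction are routine; the weight of the argument sits entirely in the localization step.
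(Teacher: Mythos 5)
Your proposal reconstructs the Penrose--Yukich martingale route (unit-cube filtration, martingale differences $D_{n,i}$, localized truncations, $m$-dependent/Bernstein-block CLT), which is precisely the approach this paper sets out to avoid, and the hypotheses of Theorem~\ref{thm:homo} are calibrated for a different argument. This matters because two of your steps do not follow from the stated assumptions. First, your moment bound $\sup_{n,i}\Ex|D_{n,i}|^{p'}<\infty$ rests on telescoping $\Delta_{n,i}$ over the Poisson-many points of $Q_{n,i}$; the intermediate add-one costs in that telescope are evaluated at configurations of the form ``(Poisson restriction with the cube's points deleted) together with finitely many re-inserted points'', which are \emph{not} distributed as $\cP(\lambda)|_W$ for any cube $W$. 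Condition~\eqref{p-moment} only controls $D_0(\cP(\lambda)|_W)$, so it gives you nothing about these augmented configurations; you would need a bounded-moment hypothesis of the type $\sup_{W,\cA}\Ex[|D_0(\cP(\lambda)|_W\cup\cA)|^p]<\infty$, which is what Penrose--Yukich in effect assume and what this paper deliberately does not. Second, the localization step $\sup_i\Ex[(D_{n,i}-D_{n,i}^{(k)})^2]\to 0$ --- which you yourself flag as ``the crux and the main obstacle'' --- is asserted, not proved, and under the paper's definition of weak stabilization it is not available: weak stabilization here is only the almost-sure convergence of $D_0(\cP(\lambda)|_{V_n})$ along pure cube restrictions, with no extra points permitted, whereas controlling $D_{n,i}-D_{n,i}^{(k)}$ again forces you through add-one costs at non-Poisson configurations. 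Insensitivity to added finite sets is exactly the content of \emph{strong} stabilization, which Theorem~\ref{thm:homo} does not assume. So the proposal is incomplete at its central point, and the missing piece is not a routine verification.

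For contrast, the paper's proof avoids both issues by a different decomposition: partition $K_n$ into cubes $W_i$ of volume $L$ and approximate $Y_n$ by $X_{n,L}=n^{-1/2}\sum_i(H(\cP|_{W_i})-\Ex[H(\cP|_{W_i})])$, a genuine sum of i.i.d.\ terms (so the classical CLT applies directly, with no $m$-dependence and no Bernstein blocks), and then control $\Var[X_{n,L}-Y_n]$ by the Poincar\'e inequality~\eqref{Poincare} applied to $H(\fX\cap K_n)-\sum_iH(\fX\cap W_i)$. The Poincar\'e bound involves only $D_y(\cP|_{K_n})$ and $D_y(\cP|_{W_i})$, i.e.\ add-one costs at pure Poisson cube restrictions --- exactly the objects that \eqref{p-moment} bounds and that weak stabilization makes close for $y$ deep inside $W_i$. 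Letting $n\to\infty$, then $L\to\infty$, and invoking Lemma~\ref{lem:CLT-triangle} finishes the proof. If you want to salvage your route, you must either strengthen the hypotheses to strong stabilization plus moments over augmented configurations, or switch to the block-sum/Poincar\'e argument.
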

This theorem (with $p = 4$) is a special case of Theorem 3.1 in \cite{PY-2001} in which the restriction of $\cP(\lambda)$ on a general sequence of subsets $\{B_n\}$ was considered. Thus the weak stabilization and the moment condition~\eqref{p-moment} should be defined in terms of $\{B_n\}$.

\begin{proof} For $L > 0$, and for each $n$, divide the cube $K_n := [0, n^{1/d})^d$ according to the lattice $L^{1/d} \Z^d$ and let $\{W_i\}$ be the lattice cubes which are  entirely contained in $K_n$. Let 
\[
	\quad  X_{n, L} = \frac{1}{\sqrt n} \sum_{i} \Big(H(\cP_{W_i}) - \Ex[H(\cP_{W_i})] \Big) = \frac{\sqrt{\ell_n}}{\sqrt{n}} \frac{1}{\sqrt{\ell_n}}  \sum_{i} (\cdots ) ,
\]
$\ell_n$ being the number of cubes $\{W_i\}$. Here for simplicity, we remove $\lambda$ in formulae. Then $X_{n,L}$ is a (scaled) sum of i.i.d.~mean zero random variables. Note that the variance of $H(\cP_{W_i})$ is finite as a consequence of the assumption~\eqref{p-moment} by using the Poincar\'e inequality. Thus by the classical central limit theorem, for fixed $L > 0$, as $n \to \infty$,
\[
	X_{n, L} \dto \Normal(0, \sigma_L^2), \quad {\Var[X_{n, L}]} \to \sigma_L^2 = L^{-1}{\Var[H(\cP_{W_i})]},
\]
because $\ell_n / n \to 1/L$.

The sequence $X_{n, L}$ well approximates  $Y_n:= n^{-1/2}(H(\cP_n) - \Ex[H(\cP_n)] )$ in the following sense
\begin{equation}\label{variance-to-zero}
	\lim_{L \to \infty} \limsup_{n \to \infty}\Var[X_{n, L} - Y_n] = 0.
\end{equation}
Once this equation is proved, then a CLT for $Y_n$ follows from Lemma~\ref{lem:CLT-triangle}. Thus, the remaining task is to show~\eqref{variance-to-zero}. We note here that the idea of taking partition like this has been used to prove the strong law of large numbers for Betti numbers in the thermodynamic regime \cite{Trinh-2017, Yoge-2017}.

It follows from the Poincar\'e inequality  that 
\begin{align}
	&\Var[X_{n, L} - Y_n] \nonumber\\
	 &\le \frac\lambda n \int_{K_n} \Ex[|D_y(\cP_{n}) - \sum_i D_y(\cP_{W_i}) \one_{W_i}(y) |^2] dy \nonumber\\
	&= \frac{\lambda}{n} \int_{K_n \setminus (\cup_i W_i)} \Ex[|D_y(\cP_{n})|^2] dy + \frac \lambda n  \sum_i \int_{W_i} \Ex[|D_y(\cP_{n}) - D_y(\cP_{W_i})|^2] dy . \label{homo-Poincare}
\end{align}
Here we have used the Poincar\'e inequality for the functional 
\[
	H'(\fX) := H(\fX \cap K_n) - \sum_{i}H(\fX \cap W_i).
\]
The integrands in the above integrals are uniformly bounded by the assumption~\eqref{p-moment}, that is, there is a constant $C>0$ such that
\[
	\Ex[|D_y(\cP_{n})|^2] \le C, \quad \Ex[|D_y(\cP_{n}) - D_y(\cP_{W_i})|^2] \le C.
\]
Thus the first term in \eqref{homo-Poincare} vanishes as $n \to \infty$.

For the second term, note that the weak stabilization assumption, together with the uniform boundedness assumption~\eqref{p-moment}, implies that  
\[
	\Ex[|D_0(\cP_{V_n}) - \Delta|^2] \to 0, 
\]
for any sequence $\{V_n \ni 0\}$ of cubes tending to $\R^d$ as $n \to \infty$. It follows that for given $\varepsilon > 0$, we can choose a number $t > 0$  such that for any pair $(V,W)$ of cubes  with $B_t(0) \subset V\cap W$, 
\[
	\Ex[|D_0(\cP_{V}) - D_0(\cP_{W})|^2] < \varepsilon.
\]
Here $B_r(x)$ denotes the closed ball of radius $r$ centered at $x$ (with respect to the Euclidean metric). Note that the above inequality still holds if $0$ is replaced by any $y\in \R^d$ because of the translation invariance of $H$ and of $\cP(\lambda)$.

Let $\interior(W_i):=\{y \in W_i :  B_t(y) \subset W_i\}$ and $\partial(W_i): = W_i \setminus \interior(W_i)$, for $L > 2t$. Then $|\partial (W_i)| = L - (L^{1/d} - 2t)^d \le 2td L^{(d-1)/d}$. Here $|A|$ denotes the volume of a set $A$. Note that $W_i \subset K_n$. Thus for $y \in \interior(W_i)$, $\Ex[|D_y(\cP_{n}) - D_y(\cP_{W_i})|^2]<\varepsilon$. Then the second term in \eqref{homo-Poincare} can be estimated as follows (for $n, L > 2t$),
\begin{align*} 
	&\frac \lambda n  \sum_i \int_{W_i} \Ex[|D_y(\cP_{n}) - D_y(\cP_{W_i})|^2] dy \\
	&=\frac \lambda n  \sum_i \left (\int_{\interior(W_i)} \left(\cdots \right)dy  + \int_{\partial (W_i)} \left(\cdots \right)dy \right) \\
	&\le \frac \lambda n  \sum_i \left (\int_{\interior (W_i)} \varepsilon dy  + \int_{\partial (W_i)} C dy \right) \\ 
	&\le  \lambda \varepsilon + \frac{const}{L^{1/d}}.
\end{align*}
Therefore
\[
	\limsup_{L \to \infty} \limsup_{n \to \infty} 	\Var[X_{n, L} - Y_n]  \le \lambda \varepsilon,
\]
which implies the equation~\eqref{variance-to-zero} because $\varepsilon$ is arbitrary. The theorem is proved.
\end{proof}

\subsection{Non-homogeneous Poisson point processes}
Let $f \colon \R^d \to [0, \infty)$ be a bounded function with compact support. We are going to establish a central limit theorem for $H(n^{1/d} \cP(n f))$. When $f$ is a probability density function, then $\cP(n f)$ has the same distribution with the Poissonized version $\cP_n = \{X_1, \dots, X_{N_n}\}$. However, in this section, $f$ need not be a probability density function. Let $\tilde \cP_n = n^{1/d} \cP(n f)$. Then $\tilde \cP_n$ is a Poisson point process with intensity function $f(x/n^{1/d})$.

Let us discuss some terminologies. The functional $H$ is \emph{strongly stabilizing} on $\cP(\lambda)$ if there exist (finite) random variables $\tau(\lambda)$ (a radius of stabilization of $H$) and $\Delta(\lambda)$ (the limiting add one cost) such that almost surely, 
	\[
		D_0 ((\cP (\lambda)|_{B_{\tau(\lambda)}(0)})\cup \cA) = \Delta(\lambda),
	\]
for all finite $\cA \subset \R^d$ satisfying $\cA \cap B_{\tau(\lambda)}(0) = \emptyset$. It is clear that the strong stabilization implies the weak one.

The functional $H$ satisfies the \emph{Poisson bounded moments condition} on $\{\tilde \cP_n\}$ if there exists a constant $p > 2$ such that 
	\begin{equation}\label{bdd-moments}
		\sup_n \sup_{y \in \R^d} \sup_{y \in W\text{:cube}}\Ex[|D_y(\tilde \cP_n|_{W})|^{p}] < \infty.
	\end{equation}
We claim that this condition on $\{\tilde \cP_n\}$ implies the condition~\eqref{p-moment} on a homogeneous Poisson point process $\cP(\lambda)$ with density $\lambda = f(x)$, provided that $x$ is a Lebesgue point of $f$. Indeed, by definition, the point $x$ is a Lebesgue point of $f$ if 
\[
	\lim_{r \to 0+} \frac{1}{|B_r(x)|} \int_{B_r(x)} |f(y) - f(x)|dy = 0.
\]
Let $W \ni 0$ be a cube. Let $W_n = (n^{1/d}x + W)$ and $V_n = x + n^{-1/d}W$. Then $|V_n| = n^{-1}|W|$, and hence,
\[
	\int_{W_n} |f(y/n^{1/d}) - f(x)| dy = n \int_{V_n} |f(z) - f(x)| dz \to 0 \text{ as } n \to \infty.
\]
Lemma~\ref{lem:L1-convergence} applying to the shifted point process $(\tilde \cP_{n}|_{W_n} - n^{1/d} x)$ and to the add one cost function $D_0$ implies that 
\[
	D_{n^{1/d}x} (\tilde \cP_n|_{W_n}) = D_0(\tilde \cP_{n}|_{W_n} - n^{1/d} x) \dto D_0(\cP(\lambda)|_{W}).
\]
Then by Fatou's lemma, 
\[
	\Ex[|D_0(\cP(\lambda)|_{W})|^p] \le \limsup_{n \to \infty} \Ex[|D_{n^{1/d}x} (\tilde \cP_n|_{W_n})|^p],
\]
from which the condition~\eqref{p-moment} follows. Consequently, a CLT in Theorem~\ref{thm:homo} holds for $\lambda = f(x)$, where $x$ is a Lebesgue point  of $f$, under the assumption that $H$ is strongly stabilizing on $\cP(\lambda)$ and satisfies the Poisson bounded moments condition on $\{\tilde \cP_n\}$.

The functional $H$ satisfies the \emph{locally bounded moments condition} on $\{\tilde \cP_n\}$ if for any cube $W \subset \R^d$, there is a $\delta > 0$ such that
	\begin{equation}\label{locally}
		\sup_n \sup_{y} 	\Ex[|H(\tilde\cP_n|_{y+W})|^{2+\delta}] < \infty.
	\end{equation}
This condition is a technical one. So far, we do not know whenever or not it is a consequence of the  Poisson bounded moments condition.

Now we can state the main result in this section.
\begin{theorem}\label{thm:Poisson}
Let $f$ be a non-negative bounded function with compact support. Let $\Lambda = \sup f(x)$.
Assume that the functional $H$ is strongly stabilizing on $\cP(\lambda)$ for any $\lambda \in [0, \Lambda]$, satisfies the Poisson bounded moments condition and the locally bounded moments condition. 
Then as $n \to \infty$,
\[
	\frac{\Var[H(\tilde\cP_n)]}{n} \to \sigma^2,\quad 
	\frac{H(\tilde\cP_n) - \Ex[H(\tilde\cP_n )]}{\sqrt{n}} \dto \Normal(0, \sigma^2).
\]
Here $\sigma^2 = \int  \hat \sigma^2(f(x)) dx$, with $\hat \sigma^2(\lambda)$ being the limiting variance in Theorem~{\rm\ref{thm:homo}}. 
\end{theorem}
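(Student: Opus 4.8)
The plan is to follow the strategy behind the proof of Theorem~\ref{thm:homo}: approximate $Y_n := n^{-1/2}(H(\tilde\cP_n) - \Ex[H(\tilde\cP_n)])$ by a sum over a fine grid of cubes and then invoke Lemma~\ref{lem:CLT-triangle}. The new feature is that the cubes now carry different, nearly constant, local intensities, so the blocks are independent but not identically distributed and the classical CLT must be replaced by Theorem~\ref{thm:CLT-Lyapunov}. Fix a cube $W$ with $\supp f \subset \interior(W)$, so that $\tilde\cP_n$ is supported in the cube $n^{1/d}W$ and $H(\tilde\cP_n) = H(\tilde\cP_n|_{n^{1/d}W})$. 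For $L>0$ and each $n$, partition $n^{1/d}W$ by the lattice $L^{1/d}\Z^d$, let $\{Q_{n,j}\}_{j=1}^{\ell_n}$ (with $\ell_n \le cn$) be the lattice cubes of volume $L$ contained in $n^{1/d}W$, write $Q_0 = [0,L^{1/d})^d$, and set
\[
	X_{n,L} = \frac{1}{\sqrt n}\sum_{j=1}^{\ell_n}\bigl(H(\tilde\cP_n|_{Q_{n,j}}) - \Ex[H(\tilde\cP_n|_{Q_{n,j}})]\bigr).
\]
I will verify the two hypotheses of Lemma~\ref{lem:CLT-triangle} with the role of $k$ played by $L\to\infty$.

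For fixed $L$, the summands of $X_{n,L}$ are independent by the Poisson property on disjoint sets, and $\sup_n\sup_j\Ex[|H(\tilde\cP_n|_{Q_{n,j}})|^{2+\delta}]<\infty$ by the locally bounded moments condition~\eqref{locally} for the cube $Q_0$. If $x$ is a Lebesgue point of $f$ in $\interior(W)$ and $Q_{n,j(n,x)}$ is the lattice cube containing $n^{1/d}x$, then $\int_{Q_{n,j(n,x)}}|f(w/n^{1/d}) - f(x)|\,dw = n\int_{n^{-1/d}Q_{n,j(n,x)}}|f(z)-f(x)|\,dz \to 0$ by the Lebesgue differentiation theorem, so Corollary~\ref{cor:L1-convergence} (using the moment bound just quoted) gives $\Var[H(\tilde\cP_n|_{Q_{n,j(n,x)}})] \to \Var[H(\cP(f(x))|_{Q_0})]$. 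Viewing $\frac1n\sum_j\Var[H(\tilde\cP_n|_{Q_{n,j}})]$ as $L^{-1}$ times the integral of the step function equal to $\Var[H(\tilde\cP_n|_{Q_{n,j}})]$ on $n^{-1/d}Q_{n,j}$ and applying dominated convergence (these step functions are uniformly bounded by~\eqref{locally} and converge a.e.), one obtains $\frac1n\sum_j\Var[H(\tilde\cP_n|_{Q_{n,j}})] \to \sigma_L^2 := L^{-1}\int\Var[H(\cP(f(x))|_{Q_0})]\,dx$. Theorem~\ref{thm:CLT-Lyapunov} then yields $X_{n,L}\dto\Normal(0,\sigma_L^2)$ and $\Var[X_{n,L}]\to\sigma_L^2$, which is hypothesis (i) of Lemma~\ref{lem:CLT-triangle}.

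For hypothesis (ii) I must show $\lim_{L\to\infty}\limsup_{n\to\infty}\Var[X_{n,L}-Y_n] = 0$. Since $X_{n,L}-Y_n = -n^{-1/2}(H'(\tilde\cP_n) - \Ex[H'(\tilde\cP_n)])$ with $H'(\fX) = H(\fX\cap n^{1/d}W) - \sum_j H(\fX\cap Q_{n,j})$, the Poincar\'e inequality~\eqref{Poincare} bounds $\Var[X_{n,L}-Y_n]$ by $n^{-1}$ times $\int_{n^{1/d}W\setminus\cup_j Q_{n,j}}\Ex[|D_y(\tilde\cP_n)|^2]f(y/n^{1/d})\,dy + \sum_j\int_{Q_{n,j}}\Ex[|D_y(\tilde\cP_n) - D_y(\tilde\cP_n|_{Q_{n,j}})|^2]f(y/n^{1/d})\,dy$; all integrands are bounded by a constant coming from $M := \sup$ in~\eqref{bdd-moments} (via H\"older, applied to the cubes $n^{1/d}W$ and $Q_{n,j}$), so the first term is $O(L^{1/d}n^{-1/d})\to0$, and for the second, given $t>0$ the boundary layers $\{y\in Q_{n,j}:B_t(y)\not\subset Q_{n,j}\}$ contribute $O(t/L^{1/d})\to0$ as $L\to\infty$. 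After the change of variable $x = y/n^{1/d}$ and a translation, the contribution of the $t$-interiors is at most $\int_W h_{n,t}(x)f(x)\,dx$, where $h_{n,t}(x)$ is the supremum of $\Ex[|D_0(\check\cP_{n,x}|_A) - D_0(\check\cP_{n,x}|_{A'})|^2]$ over cubes $B_t(0)\subset A'\subset A\subset n^{1/d}(W-x)$ and $\check\cP_{n,x} := \tilde\cP_n - n^{1/d}x$ has intensity $f(x+\cdot/n^{1/d})$; since $h_{n,t}$ is decreasing in $t$ and uniformly bounded, reverse Fatou and dominated convergence reduce everything to proving $\lim_{t\to\infty}\limsup_{n\to\infty}h_{n,t}(x) = 0$ for a.e.\ $x$. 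To prove this, fix a Lebesgue point $x$ of $f$, put $\lambda = f(x)$, let $\tau(\lambda),\Delta(\lambda)$ be as in the strong stabilization hypothesis, and given $\eta>0$ choose $t$ with $\Prob(\tau(\lambda)>t)<\eta$ and some $t'>t$; couple $\check\cP_{n,x}$ with $\cP(\lambda)$ as in the proof of Lemma~\ref{lem:L1-convergence} so that they agree on $B_{t'}(0)$ with probability $\exp(-\int_{B_{t'}(0)}|f(x+w/n^{1/d}) - \lambda|\,dw)\to1$ (again by the Lebesgue point property). On the event that they agree on $B_{t'}(0)$ and $\tau(\lambda)\le t$, strong stabilization forces $D_0(\check\cP_{n,x}|_A) = \Delta(\lambda) = D_0(\check\cP_{n,x}|_{A'})$ for every admissible pair $(A,A')$; hence $\Ex[|D_0(\check\cP_{n,x}|_A) - D_0(\check\cP_{n,x}|_{A'})|^2]$ reduces to its restriction to the complementary event, which by H\"older's inequality and~\eqref{bdd-moments} is at most a constant times $\Prob(\text{that event fails})^{1-2/p}$, so $\limsup_n h_{n,t}(x)\le(\text{const})\,\eta^{1-2/p}$, and letting $\eta\to0$ finishes.

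Lemma~\ref{lem:CLT-triangle} now gives that $\sigma^2 := \lim_{L\to\infty}\sigma_L^2$ exists and that $Y_n\dto\Normal(0,\sigma^2)$ with $\Var[Y_n]\to\sigma^2$, i.e.\ $n^{-1}\Var[H(\tilde\cP_n)]\to\sigma^2$. To identify $\sigma^2$, note that $\sigma_L^2 = \int L^{-1}\Var[H(\cP(f(x))|_{[0,L^{1/d})^d})]\,dx$, that its integrand converges for a.e.\ $x$ to $\hat\sigma^2(f(x))$ — well defined for a.e.\ $x$ by the discussion preceding the theorem, and satisfying the identity $\hat\sigma^2(\lambda) = \lim_{L\to\infty}L^{-1}\Var[H(\cP(\lambda)|_{[0,L^{1/d})^d})]$ obtained inside the proof of Theorem~\ref{thm:homo} — and that the integrand is dominated by $\Lambda$ times a constant (by the Poincar\'e inequality together with~\eqref{bdd-moments} at Lebesgue points, exactly as in the discussion following~\eqref{bdd-moments}); dominated convergence then gives $\sigma^2 = \int\hat\sigma^2(f(x))\,dx$. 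I expect the main obstacle to be the last reduction in the previous paragraph: strong stabilization is postulated only for homogeneous Poisson processes, so the stabilization estimate cannot be expected uniformly in $x$ — the local intensity $f(x+\cdot/n^{1/d})$ converges to a constant only at Lebesgue points — and it is essential to phrase the error as the integral $\int_W h_{n,t}(x)f(x)\,dx$ and discard the Lebesgue-null set of bad $x$ by dominated convergence in both $n$ and $t$; a secondary nuisance is making the Riemann-sum identification of $\sigma_L^2$ in hypothesis (i) precise, which again rests on the Lebesgue differentiation theorem and Corollary~\ref{cor:L1-convergence}.
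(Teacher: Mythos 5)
Your proposal is correct and follows essentially the same route as the paper: partition the blown-up window into cubes of volume $L$, apply the Lyapunov CLT (Theorem~\ref{thm:CLT-Lyapunov}) to the independent blocks with the block variances identified at Lebesgue points via Corollary~\ref{cor:L1-convergence}, control $\Var[Y_n-X_{n,L}]$ by the Poincar\'e inequality plus a local coupling with $\cP(f(x))$ and the strong stabilization radius, and conclude with Lemma~\ref{lem:CLT-triangle} and dominated convergence for the limiting variance. The only deviations are cosmetic (coupling on a ball $B_{t'}(0)$ rather than on the whole cube $W_i$, and phrasing the interior error through the supremum $h_{n,t}(x)$ over cube pairs — whose measurability you should either check or sidestep by working with the specific pair, as the paper does).
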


\begin{remark}
From the argument following the Poisson bounded moments condition, we see that the limiting variance $\hat \sigma^2 (f(x))$ is defined at every Lebesgue point $x$ of $f$. Moreover, the Lebesgue differentiation theorem states that for an integrable function $f$, almost every point is a Lebesgue point. Thus $ \hat \sigma^2 (f(x))$ is defined almost everywhere.
\end{remark}

We use the same idea as in  the proof of Theorem~\ref{thm:homo}. Let $S$ be a cube which contains the support of $f$. For $L>0$, divide $\R^d$ according to the lattice $(L/n)^{1/d} \Z^d$ and let $\{V_i\}$ be the cubes which intersect with $S$. Set $S_n = \cup_i V_i$. Then the number of $\{V_i\}$, denoted by $\ell_n$, satisfies that $\ell_n/n =|S_n|/L \to |S|/L$ as $n \to \infty$. Let $W_i$ be the image of $V_i$ under the map $x \mapsto n^{1/d}x$. Recall that $\tilde \cP_n$ is a Poisson point process on $\tilde S_n = n^{1/d}S$ with intensity function $f(x/n^{1/d})$. Assume that the functional $H$ satisfies all the assumptions in Theorem~\ref{thm:Poisson}.

Let 
\[
	X_{n, L} = \frac{1}{\sqrt{n}} \sum_{i} \Big( H(\tilde \cP_n|_{W_i})-\Ex[H(\tilde \cP_n|_{W_i})]  \Big).
\]
\begin{lemma}\label{lem:variance-bdd}
There is a constant $M>0$ such that for any cube $W$,
	\[
		\frac{\Var[H(\tilde \cP_n|_{W})]}{|W|} \le M.
	\]
\end{lemma}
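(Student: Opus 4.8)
The plan is to obtain the bound directly from the Poincaré inequality \eqref{Poincare}. The key observation is that $\tilde\cP_n|_W$ is itself a Poisson point process, with intensity function $x \mapsto f(x/n^{1/d})\,\one_W(x)$, and this intensity is bounded above by $\Lambda = \sup f(x)$ uniformly in $n$ because $f$ is bounded. Applying \eqref{Poincare} to the functional $H$ and the process $\tilde\cP_n|_W$ therefore gives
\[
	\Var\big[H(\tilde\cP_n|_W)\big] \le \int_W \Ex\big[|D_x(\tilde\cP_n|_W)|^2\big]\, f(x/n^{1/d})\, dx \le \Lambda \int_W \Ex\big[|D_x(\tilde\cP_n|_W)|^2\big]\, dx .
\]

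The next step is to control the integrand uniformly. For every $x \in W$, the cube $W$ contains $x$, so the Poisson bounded moments condition \eqref{bdd-moments} provides an exponent $p > 2$ and a constant $C > 0$, both independent of $n$, of $x$, and of the cube $W$, such that $\Ex[|D_x(\tilde\cP_n|_W)|^p] \le C$. By Hölder's inequality (equivalently, Lyapunov's inequality, since $|W \cap (\cdot)|$-type averaging is not even needed here) this yields $\Ex[|D_x(\tilde\cP_n|_W)|^2] \le C^{2/p}$. Substituting into the previous display gives $\Var[H(\tilde\cP_n|_W)] \le \Lambda\, C^{2/p}\, |W|$, so the lemma holds with $M = \Lambda\, C^{2/p}$.

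This is essentially a one-line estimate, so I do not expect a genuine obstacle; the points that require a little care are purely bookkeeping. First, one must check that the quantifiers in \eqref{bdd-moments} match the situation at hand: the condition is a supremum of $\Ex[|D_y(\tilde\cP_n|_W)|^p]$ over all $n$, all base points $y$, and all cubes $W \ni y$, which is exactly what is produced by the Poincaré bound, where the add-one-cost base point $x$ always lies in the cube $W$ over which we integrate. Second, one uses that the intensity of $\tilde\cP_n|_W$ is $\le \Lambda$ uniformly in $n$; if $W$ happens to lie outside $n^{1/d}\supp f$ the left-hand side is simply $0$, which is trivially consistent with the claimed bound.
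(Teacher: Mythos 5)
Your argument is correct and is essentially identical to the paper's: both apply the Poincaré inequality \eqref{Poincare} to $H(\tilde\cP_n|_W)$, bound the intensity by $\Lambda$, and control $\Ex[|D_y(\tilde\cP_n|_W)|^2]$ uniformly via the Poisson bounded moments condition \eqref{bdd-moments} together with Lyapunov's inequality. No issues.
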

\begin{proof}
It follows from the Poisson bounded moments condition that there is a constant $C>0$ such that for all $n$, all $y$ and all $W \ni y$,
	\begin{equation}\label{2nd-bdd}
		\Ex[|D_y(\tilde \cP_n|_{W})|^{2}] \le 		\Ex[|D_y(\tilde \cP_n|_{W})|^{p}]^{2/p} \le C.
	\end{equation}
Then the desired estimate is just a consequence of the Poincar\'e inequality 
	\[
		\Var[H(\tilde \cP_n|_{W})]   \le \int_{W} \Ex[|D_y(\tilde \cP_n|_{W})|^2] f(y/n^{1/d}) dy \le C \Lambda |W|. \qedhere
	\]
\end{proof}

\begin{lemma}\label{lem:Lebesgue-point}
Let $x$ be a Lebesgue point of $f$. For each $n$, let $V_{n}$ be a cube of volume $L/n$ containing $x$. Let $W_n = n^{1/d} V_n$.
Then as $n \to \infty$,
\[
	\Ex[H(\tilde \cP_n|_{W_n})] \to \Ex[H(\cP_L(\lambda))], \quad	\Var[H(\tilde \cP_n|_{W_n})] \to \Var[H(\cP_L(\lambda))],
\]
where $\lambda = f(x)$ and $\cP_L(\lambda)$ denotes the restriction of $\cP(\lambda)$ on a cube of volume $L$. In particular, we also have $L^{-1}\Var[H(\cP_L(\lambda))] \le M$, where $M$ is the constant in Lemma~{\rm\ref{lem:variance-bdd}}.
\end{lemma}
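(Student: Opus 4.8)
The plan is to reduce the statement to Corollary~\ref{cor:L1-convergence} by verifying its two hypotheses for the shifted point processes. First I would translate everything to the origin: set $\tilde\cP_n|_{W_n} - n^{1/d}x$, which (since $W_n = n^{1/d}V_n$ and $x \in V_n$) is a Poisson point process on the fixed-volume region $W_n - n^{1/d}x =: \hat W_n$, with intensity function $f_n(z) := f\big((z + n^{1/d}x)/n^{1/d}\big) = f\big(x + z/n^{1/d}\big)$. Each $\hat W_n$ is a cube of volume $L$ containing the origin; although the $\hat W_n$ need not coincide, translation invariance of $H$ means this does not matter, exactly as in Corollary~\ref{cor:L1-convergence}. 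The key computation is the $L^1$-convergence $\int_{\hat W_n} |f_n(z) - \lambda|\,dz \to 0$ with $\lambda = f(x)$: by the change of variables $z = n^{1/d}(w - x)$ this integral equals $n \int_{V_n} |f(w) - f(x)|\,dw$, and since $V_n$ is a cube of volume $L/n$ containing the Lebesgue point $x$, we have $V_n \subset B_{r_n}(x)$ with $r_n \to 0$ and $|B_{r_n}(x)| \le c_d\, r_n^d$ comparable to $L/n$ up to a fixed constant; hence $n\int_{V_n}|f(w)-f(x)|\,dw \le \frac{c_d r_n^d n}{|B_{r_n}(x)|}\cdot\frac{1}{|B_{r_n}(x)|}\int_{B_{r_n}(x)}|f(w)-f(x)|\,dw \to 0$ by the definition of a Lebesgue point. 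This is the step I expect to require the most care — keeping track of the relationship between $L/n$, the cube $V_n$, and the enclosing ball $B_{r_n}(x)$ so that the normalizing factors match up cleanly — though it is ultimately routine.

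With the $L^1$-convergence in hand, the first part of Corollary~\ref{cor:L1-convergence} gives $H(\tilde\cP_n|_{W_n}) = H(\tilde\cP_n|_{W_n} - n^{1/d}x) \dto H(\cP(\lambda)|_{\hat W}) = H(\cP_L(\lambda))$, where $\hat W$ is any cube of volume $L$ (translation invariance again). To upgrade this to convergence of means and variances via the second part of Corollary~\ref{cor:L1-convergence}, I need a uniform $(2+\delta)$-moment bound $\sup_n \Ex[|H(\tilde\cP_n|_{W_n})|^{2+\delta}] < \infty$. This is precisely what the locally bounded moments condition~\eqref{locally} supplies: each $W_n$ is a translate of a cube, and although the cubes $W_n$ vary with $n$, they all have the common volume $L$, so one fixed cube $W$ of volume $L$ with $W_n = y_n + W$ works, and \eqref{locally} yields the bound with a $\delta$ depending only on $W$ (hence only on $L$).

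Finally, the last assertion $L^{-1}\Var[H(\cP_L(\lambda))] \le M$ follows by passing to the limit in Lemma~\ref{lem:variance-bdd}: for each $n$ we have $\Var[H(\tilde\cP_n|_{W_n})]/|W_n| = \Var[H(\tilde\cP_n|_{W_n})]/L \le M$, and the left side converges to $\Var[H(\cP_L(\lambda))]/L$ by the convergence just established, so the bound is preserved in the limit.
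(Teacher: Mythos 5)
Your proof is correct and takes essentially the same route as the paper: the paper likewise reduces the lemma to Corollary~\ref{cor:L1-convergence} by citing the $L^1$-convergence $\int_{W_n}|f(y/n^{1/d})-f(x)|\,dy\to 0$ (obtained from the Lebesgue-point property exactly as in the earlier derivation of \eqref{p-moment} from \eqref{bdd-moments}) together with the locally bounded moments condition, and then passes the bound of Lemma~\ref{lem:variance-bdd} to the limit. The only blemish is a stray extra factor of $1/|B_{r_n}(x)|$ in your displayed inequality --- the correct prefactor is $n\,|B_{r_n}(x)| = c_d r_n^d\, n$, which is bounded by a constant multiple of $L$ --- but your surrounding prose states the argument correctly.
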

\begin{proof}
Recall from the derivation of the condition~\eqref{p-moment}  from the condition~\eqref{bdd-moments} that
\[
	\int_{W_n} |f(y/n^{1/d}) - f(x) | dy \to 0 \text{ as } n \to \infty.
\]
Together with the locally bounded moments condition, all the conditions in Corollary~\ref{cor:L1-convergence} are satisfied. Thus the convergences of expectations and variances follow. The proof is complete.
\end{proof}

\begin{lemma}\label{lem:CLT-triangle-2}
	For fixed $L > 0$, as $n \to \infty$, 
	\[
		{\Var[X_{n, L}]} \to  \int_S \frac{\Var[H(\cP_L(f(x)))]}{L}  dx=: \sigma_L^2,\quad  X_{n, L} \dto \Normal(0, \sigma_L^2).
	\]
\end{lemma}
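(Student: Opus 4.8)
The plan is to establish the two assertions---convergence of variances and the CLT---separately, using the fact that $X_{n,L}$ is, for each fixed $L$, a sum of \emph{independent} (but not identically distributed) mean-zero random variables indexed by the cubes $\{W_i\}$. Indeed, the blocks $\tilde\cP_n|_{W_i}$ are independent across $i$ by the spatial independence of Poisson point processes, so $\Var[X_{n,L}] = \frac1n\sum_i \Var[H(\tilde\cP_n|_{W_i})]$ and the summands are independent. The strategy is to show that $\frac1n\sum_i \Var[H(\tilde\cP_n|_{W_i})]$ is a Riemann-type sum converging to $\int_S L^{-1}\Var[H(\cP_L(f(x)))]\,dx$, and then to apply the Lyapunov CLT for triangular arrays (Theorem~\ref{thm:CLT-Lyapunov}) to the array $\{\xi_{n,i}\} = \{H(\tilde\cP_n|_{W_i}) - \Ex[H(\tilde\cP_n|_{W_i})]\}$.

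For the variance convergence, I would argue as follows. Pick for each cube $V_i$ a representative point $x_i \in V_i$; since $V_i$ has volume $L/n$ and $W_i = n^{1/d}V_i$, Lemma~\ref{lem:Lebesgue-point} gives, \emph{for each fixed $i$ whose representative is a Lebesgue point}, that $\Var[H(\tilde\cP_n|_{W_i})] \to \Var[H(\cP_L(f(x_i)))]$ as $n\to\infty$. The issue is that the number of cubes grows with $n$, so a pointwise statement for fixed $i$ is not directly enough; one must control the sum uniformly. Here I would use: (a) the uniform bound $\frac1n\sum_i \Var[H(\tilde\cP_n|_{W_i})] = \frac{L}{n}\sum_i \frac{\Var[H(\tilde\cP_n|_{W_i})]}{L}$ with each term $\le ML/n \cdot \ell_n \le M|S_n|$ bounded by Lemma~\ref{lem:variance-bdd}; and (b) the a.e.\ finiteness of the limiting integrand (Lebesgue differentiation theorem, as in the Remark). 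The cleanest route is to define the step function $g_n(x) = L^{-1}\Var[H(\tilde\cP_n|_{W_{i(x)}})]$, where $i(x)$ is the index with $x\in V_{i(x)}$, so that $\frac1n\sum_i\Var[H(\tilde\cP_n|_{W_i})] = \int_{S_n} g_n(x)\,dx$. By Lemma~\ref{lem:variance-bdd}, $\sup_n\|g_n\|_\infty \le M$; and for a.e.\ $x$ (every Lebesgue point), Lemma~\ref{lem:Lebesgue-point} applied to the shrinking cubes $V_{i(x)} \ni x$ gives $g_n(x) \to L^{-1}\Var[H(\cP_L(f(x)))]$ pointwise. Dominated convergence on the bounded set $S$ (note $S_n\downarrow$ something close to $S$, with $|S_n\setminus S|\to 0$) then yields $\Var[X_{n,L}] \to \int_S L^{-1}\Var[H(\cP_L(f(x)))]\,dx = \sigma_L^2$.

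For the CLT, I apply Theorem~\ref{thm:CLT-Lyapunov} with $\xi_{n,i} = H(\tilde\cP_n|_{W_i}) - \Ex[H(\tilde\cP_n|_{W_i})]$ and $\ell_n$ as above; since $\ell_n/n \to |S|/L$, we have $\ell_n \le cn$ for some $c$, so the hypothesis on $\ell_n$ holds. Condition~(i) of Theorem~\ref{thm:CLT-Lyapunov} is exactly the variance convergence just established, with limit $\sigma_L^2$. For condition~(ii), I need a uniform $(2+\delta)$-moment bound on $\xi_{n,i}$; this follows from the locally bounded moments condition~\eqref{locally}: taking the cube $W$ in~\eqref{locally} to be a fundamental domain of the lattice (so that each $W_i$ is a translate of it), we get $\sup_n\sup_i \Ex[|H(\tilde\cP_n|_{W_i})|^{2+\delta}] < \infty$, and subtracting the mean (bounded by Jensen) preserves a uniform $(2+\delta)$-moment bound. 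Hence $X_{n,L} = \frac1{\sqrt n}\sum_i \xi_{n,i} \dto \Normal(0,\sigma_L^2)$.

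The main obstacle is the variance-convergence step: turning the \emph{pointwise-in-$i$} convergence supplied by Lemma~\ref{lem:Lebesgue-point} into convergence of the \emph{growing sum}, which requires packaging the sum as an integral of a uniformly bounded step function and invoking dominated convergence together with the Lebesgue differentiation theorem to handle the almost-everywhere pointwise limit. Everything else---independence of blocks, the Lyapunov moment check via~\eqref{locally}, the bookkeeping with $S_n$ versus $S$---is routine.
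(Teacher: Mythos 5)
Your proposal is correct and follows essentially the same route as the paper: the paper likewise rewrites $\frac1n\sum_i\Var[H(\tilde\cP_n|_{W_i})]$ as $\int_{S_n}g_{n,L}(x)\,dx$ for the step function $g_{n,L}$, bounds it uniformly by Lemma~\ref{lem:variance-bdd}, gets a.e.\ pointwise convergence from Lemma~\ref{lem:Lebesgue-point} at Lebesgue points, applies bounded convergence (using $|S_n\setminus S|\to0$), and then invokes Theorem~\ref{thm:CLT-Lyapunov} with the locally bounded moments condition supplying the Lyapunov moment hypothesis. No gaps; your treatment of the growing-sum issue via the step-function/dominated-convergence packaging is exactly the paper's argument.
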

\begin{proof}
Let us first show the convergence of variances. 
We write the variance of $X_{n, L}$ as follows
\begin{align*}
	{\Var[X_{n, L}]} &= \frac{1}{n}  \sum_{i} \Var[H(\tilde \cP_n|_{W_i})] \\
	&= \sum_{i} \frac{\Var[H(\tilde \cP_n|_{W_i}))]}{L}  \frac{L}{n}\\
	&=\int_{S_n}  \sum_{i} \frac{\Var[H(\tilde \cP_n|_{W_i}))]}{L} \one_{V_i}(x) dx\\
	&=:\int_{S_n} g_{n, L}(x) dx.
\end{align*}
It follows from Lemma~\ref{lem:variance-bdd} that $|g_{n, L}(x)| \le M$. Moreover, when $x \in S$ is a Lebesgue point of $f$, then by Lemma~\ref{lem:Lebesgue-point}, as $n \to \infty$,
\[
	g_{n, L}(x)= \frac{\Var[H(\tilde \cP_n|_{W_{i(x,n)}})]}{L}\to \frac{\Var[H(\cP_L(f(x)))]}{L}.
\]
Here $V_{i(x,n)} = n^{-1/d}W_{i(x,n)}$ is the unique cube in $\{V_i\}$ containing $x$.
In addition, it is clear that $|S_n \setminus S| \to 0$ as $n \to \infty$. Recall that almost every $x \in S$ is a Lebesgue point.  Therefore the convergence of the variance $\Var[X_{n, L}]$ follows by the bounded convergence theorem.

The CLT for $X_{n, L}$ then follows from Theorem~\ref{thm:CLT-Lyapunov} because the locally bounded moments condition has been assumed.  The proof is complete.
\end{proof}

\begin{lemma}\label{lem:good-approximation}
The following holds
\[
	\lim_{L \to \infty} \limsup_{n \to \infty} \Var\bigg[\frac{H(\tilde \cP_n) - \Ex[H(\tilde \cP_n) ]}{\sqrt{n}} - X_{n, L} \bigg] = 0.
\]
\end{lemma}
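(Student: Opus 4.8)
The plan is to follow the proof of Theorem~\ref{thm:homo} closely. Write $Y_n := n^{-1/2}\big(H(\tilde\cP_n) - \Ex[H(\tilde\cP_n)]\big)$, so that the quantity in question is $\Var[Y_n - X_{n,L}]$; note that, since $\Var\ge 0$, it suffices to bound $\limsup_{L\to\infty}\limsup_{n\to\infty}$. Because $\tilde\cP_n$ lies almost surely in $\bigcup_iW_i$, one has $Y_n - X_{n,L} = n^{-1/2}\big(H'(\tilde\cP_n) - \Ex[H'(\tilde\cP_n)]\big)$ for the functional $H'(\fX) := H\big(\fX\cap\bigcup_iW_i\big) - \sum_iH(\fX\cap W_i)$. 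Since the half-open cubes $W_i$ partition $\bigcup_iW_i$, inserting a point $y$ alters only the summand indexed by the cube $W_{j(y)}$ containing $y$, so $D_yH'(\tilde\cP_n) = D_y(\tilde\cP_n) - D_y(\tilde\cP_n|_{W_{j(y)}})$. Applying the Poincar\'e inequality~\eqref{Poincare} to $H'$ and $\tilde\cP_n$ (whose intensity $f(\cdot/n^{1/d})$ is integrable) then gives, in analogy with~\eqref{homo-Poincare},
\[
	\Var[Y_n - X_{n,L}] \le \frac1n\int\psi_n(y)\,f(y/n^{1/d})\,dy, \qquad \psi_n(y) := \Ex\big[\,|D_y(\tilde\cP_n) - D_y(\tilde\cP_n|_{W_{j(y)}})|^2\,\big],
\]
and~\eqref{2nd-bdd} yields the crude bound $\psi_n \le 4C$. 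Fixing $R>0$ and, for $L^{1/d}>2R$, splitting each $W_j$ into $\interior_R(W_j) := \{y\in W_j : B_R(y)\subseteq W_j\}$ and a boundary layer $\partial_RW_j$ of volume $\le 2Rd\,L^{(d-1)/d}$, the part of the integral over $\bigcup_j\partial_RW_j$ is, using $\psi_n\le 4C$, $f\le\Lambda$ and $\ell_n(L/n)\to|S|$, of order $R|S|L^{-1/d}$ in the $\limsup_n$, hence it vanishes on letting $L\to\infty$ with $R$ fixed --- exactly the boundary estimate of Theorem~\ref{thm:homo}. (One may assume $\supp f\subset\interior(S)$, so that every $y$ with $f(y/n^{1/d})>0$ satisfies $B_R(y)\subseteq\tilde S_n$ once $n$ is large.)

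The interior part is the crux, and the only genuinely new difficulty is that strong stabilization is assumed only for the homogeneous processes $\cP(\lambda)$, not for the inhomogeneous $\tilde\cP_n$. To handle this, for a Lebesgue point $x$ of $f$ with $\lambda := f(x)$ and $y := n^{1/d}x$, I would use the coupling from the proof of Lemma~\ref{lem:L1-convergence} to realize $(\tilde\cP_n|_{B_R(y)}) - y$ together with a homogeneous $\cP(\lambda)|_{B_R(0)}$ on a common space so that they coincide off an event $E_n$ whose probability is at most $1 - \exp\big(-n\int_{B_{R/n^{1/d}}(x)}|f(w) - \lambda|\,dw\big)$ and hence tends to $0$ (the convergence being precisely the Lebesgue-point property); I would further take $\cP(\lambda)|_{B_R(0)}$ to be the restriction of a full $\cP(\lambda)$ carrying an a.s.\ finite stabilization radius $\tau(\lambda)$, and set $F_R := \{\tau(\lambda)>R\}$, whose probability does not depend on $n$. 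On $(E_n\cup F_R)^c$, for every cube $U$ with $B_R(y)\subseteq U$ the shifted configuration $(\tilde\cP_n|_U) - y$ takes the form $\big(\cP(\lambda)|_{B_{\tau(\lambda)}(0)}\big)\cup\cA$ with $\cA$ finite and disjoint from $B_{\tau(\lambda)}(0)$, so strong stabilization forces $D_y(\tilde\cP_n|_U) = \Delta(\lambda)$; taking $U = \tilde S_n$ and $U = W_{j(y)}$ (legitimate when $y\in\interior_R(W_{j(y)})$ and $n$ is large) shows that $D_y(\tilde\cP_n) - D_y(\tilde\cP_n|_{W_{j(y)}})$ vanishes there. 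H\"older's inequality with exponents $p/2$ and $p/(p-2)$ together with~\eqref{bdd-moments} then gives $\psi_n(y) \le (2^pC_p)^{2/p}\Prob(E_n\cup F_R)^{1-2/p}$, where $C_p$ is the constant in~\eqref{bdd-moments}; consequently $\limsup_{n\to\infty}\psi_n(n^{1/d}x)\,\one[n^{1/d}x\in\interior_R(W_{j(n^{1/d}x)})] \le (2^pC_p)^{2/p}\Prob(\tau(f(x))>R)^{1-2/p}$ for a.e.\ $x$.

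To finish, change variables $y = n^{1/d}z$ in the interior integral; its integrand is dominated by $4C\,f(z)\in L^1$, so reverse Fatou and the previous bound give
\[
	\limsup_{L\to\infty}\limsup_{n\to\infty}\Var[Y_n - X_{n,L}] \le (2^pC_p)^{2/p}\int_S\Prob\big(\tau(f(z))>R\big)^{1-2/p}f(z)\,dz
\]
for \emph{every} $R>0$, and since $\tau(\lambda)<\infty$ almost surely the right-hand side tends to $0$ as $R\to\infty$ by dominated convergence, which proves the lemma. The step I expect to be the main obstacle is the transfer of strong stabilization from $\cP(\lambda)$ to $\tilde\cP_n$: because the stabilization radius $\tau$ depends on the local density $f(x)$, there is no single radius valid uniformly in $x$, which forces one to keep $R$ as a free parameter and send it to infinity only at the very end, after integrating over $x$. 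The remaining technicalities --- discarding the $y$'s with $f(y/n^{1/d})=0$ (in particular those near $\partial S$), justifying the reverse Fatou step, and checking measurability of $z\mapsto\Prob(\tau(f(z))>R)$ (which follows from a jointly measurable realization of the family $\{\cP(\lambda)\}_{\lambda\in[0,\Lambda]}$) --- are routine.
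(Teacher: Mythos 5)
Your proof is correct and follows essentially the same route as the paper's: the Poincar\'e inequality applied to $H-\sum_i H(\cdot\cap W_i)$, the interior/boundary splitting of each $W_i$, a coupling with $\cP(f(x))$ at Lebesgue points to transfer strong stabilization to $\tilde\cP_n$, then H\"older's inequality and bounded convergence, with $L\to\infty$ taken before the stabilization cutoff $R\to\infty$. The only cosmetic difference is that you couple on the ball $B_R(y)$ rather than on the whole cube $W_i$ as the paper does; both give a coupling-failure probability tending to $0$ at Lebesgue points.
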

\begin{proof}
We begin with the Poicar\'e inequality 
\begin{align*}
	&\Var\bigg[\frac{H(\tilde \cP_n) - \Ex[H(\tilde \cP_n) ]}{\sqrt{n}} - X_{n, L} \bigg]=\frac{\Var[H(\tilde \cP_n) - \sum_i H(\tilde \cP_n|_{W_i})]}{n} \\
	 &\le \frac1n \int_{\tilde S_n} \Ex[|D_y(\tilde \cP_n) - \sum_i D_y(\tilde \cP_n|_{W_i}) \one_{W_i}(y) |^2] f(y/n^{1/d}) dy\\
	&= \frac1n \sum_i \int_{W_i} \Ex[|D_y(\tilde \cP_n) - D_y(\tilde \cP_n|_{W_i})|^2] f(y/n^{1/d}) dy.
\end{align*}
It follows from~\eqref{2nd-bdd} that, 
\[
	 \Ex[|D_y(\tilde \cP_n) - D_y(\tilde \cP_n|_{W_i})|^2] \le 4 C.
\]
Let $t>0$. Assume that $L >2t$. Recall the notations $\interior( W_i)$ and $\partial( W_i)$ from the proof in the homogeneous case.
Then 
\[
	\int_{\partial W_i} \Ex[|D_y(\tilde \cP_n) - D_y(\tilde \cP_n|_{W_i})|^2] f(y/n^{1/d}) dy
	\le {4 C \Lambda  2td L^{(d-1)/d}},
\]
and hence,
\begin{equation}\label{boundary-estimate}
	\limsup_{n \to \infty} \frac1n \sum_i \int_{\partial W_i}\Ex[|D_y(\tilde \cP_n) - D_y(\tilde \cP_n|_{W_i})|^2] f(y/n^{1/d}) dy  \le \frac{const}{L^{1/d}}.
\end{equation}

Next, we deal with the case $y \in {\rm int\,}( W_i)$. Let $x = y/n^{1/d}$.
Consider a homogeneous Poisson point process $\cP(\lambda)$ with density $\lambda = f(x)$. Let $\tau(\lambda)$ be the stabilization radius of $H$ on $\cP(\lambda)$ at $y$. There is a coupling of $\cP(\lambda)$ and $\tilde \cP_n$ such that (see the proof of Lemma~\ref{lem:L1-convergence})
\[
	\Prob(A=\{\tilde \cP_n|_{W_i} \equiv \cP(\lambda)|_{W_i}\})  = e^{-\tilde t_n(y)},
\] 
where $\tilde t_n(y) =t_n(x)= \int_{W_i} |f(y/n^{1/d}) - f(z/n^{1/d})|dz = n\int_{V_i} |f(x) - f(z)|dz$. On the event $A \cap \{\tau(\lambda) \le t\}$, by the definition of the radius of stabilization, $D_y(\tilde \cP_n) = D_y(\tilde \cP_n|_{W_i})$. Thus 
\begin{align*}
	&\Ex[|D_y(\tilde \cP_n) - D_y(\tilde \cP_n|_{W_i})|^2] \\
	&= \Ex[|D_y(\tilde \cP_n) - D_y(\tilde \cP_n|_{W_i})|^2; A^c  \cup \{\tau(\lambda) > t\}]\\
	&\le \Ex[|D_y(\tilde \cP_n) - D_y(\tilde \cP_n|_{W_i})|^{p}]^{2/p}\Prob(A^c  \cup \{\tau(\lambda) > t\})^{1/q} \\
	&\le C_p (1- e^{-\tilde t_n(y)} + \Prob(\tau(\lambda) > t))^{1/q}.
\end{align*}
Here $C_p$ is a constant which comes from the Poisson bounded moments condition~\eqref{bdd-moments}. We have used H\"older's inequality with $q$ being the H\"older conjugate number of $p/2$.
Therefore 
\begin{align*}
	&\int_{{\rm int\,}(W_i)} \Ex[|D_y(\tilde \cP_n) - D_y(\tilde \cP_n|_{W_i})|^2] f(y/n^{1/d}) dy \\
	&\le \int_{{\rm int\,}(W_i)} C_p(1- e^{-\tilde t_n(y)} + \Prob(\tau(\lambda) > t))^{1/q} f(y/n^{1/d})dy \\
	&\le n \int_{V_i} C_p (1- e^{-t_n(x)} + \Prob(\tau(f(x)) > t))^{1/q} f(x)dx. 
\end{align*}
Note that $t_n (x) = n \int_{V_i} |f(x) - f(z)| dz \to 0$ as $n \to \infty$, for a Lebesgue point $x$ of $f$. Therefore, 
\begin{align}
	&\limsup_{n \to \infty} \frac{1}{n}\sum_i \int_{{\rm int\,}(W_i)} \Ex[|D_y(\tilde \cP_n) - D_y(\tilde \cP_n|_{W_i})|^2] f(y/n^{1/d}) dy \nonumber\\
	&\le C_p \limsup_{n \to \infty}  \int_{S}  (1- e^{-t_n(x)} + \Prob(\tau(f(x)) > t))^{1/q} f(x)dx \nonumber\\
	&\le C_p\int_S  \Prob(\tau(f(x)) > t)^{1/q} f(x)dx. \label{interior-estimate}
\end{align}
Here the bounded convergence theorem has been used in the last estimate. 
Combining the two estimates \eqref{boundary-estimate} and \eqref{interior-estimate}, we arrive at
\begin{align*}
	&\limsup_{L \to \infty}\limsup_{n \to \infty}\frac{\Var[H(\tilde \cP_n) - \sum_i H(\tilde \cP_n|_{W_i})]}{n}  \\
	&\le  C_p \int_S   \Prob(\tau(f(x)) > t)^{1/q} f(x)dx.
\end{align*} 
The proof is complete by letting $t \to \infty$.
\end{proof}

Similar to the homogeneous case, a central limit theorem for $H(\tilde \cP_n)$ follows by combining Lemma~\ref{lem:CLT-triangle-2} and Lemma~\ref{lem:good-approximation}. For the limiting variance, recall that when $x$ is the Lebesgue point of $f$, 
\[
	\frac{\Var[H(\cP_L(  \lambda) )]}{L}  \to \hat\sigma^2(\lambda) \text{ as $L \to \infty$}, \quad \lambda = f(x).
\] 
Recall also from Lemma~\ref{lem:Lebesgue-point} that $
	L^{-1}{\Var[H(\cP_L(  \lambda) )]} \le M.
$
Thus, by the bounded convergence theorem again, it follows that as $L \to \infty$,
\[
	\sigma_L^2  =  \int_S \frac{\Var[H(\cP_L(f(x)))]}{L}  dx \to \int_S \hat \sigma^2(f(x)) dx = \sigma^2.
\]
Theorem~\ref{thm:Poisson} is proved.

\subsection{Binomial point processes}
Let $\{X_i\}_{i = 1}^\infty$ be an i.i.d.~sequence of $\R^d$-valued random variables with a common probability density function $f$. The function $f$ is assumed to be bounded and to have compact support. Let $\fX_n = \{X_1, \dots, X_n\}$ and $\cP_n = \{X_1, \dots, X_{N_n}\}$ be the binomial point processes and the Poisson point processes associated with $\{X_i\}$, respectively. Assume that the functional $H$ satisfies all the assumptions of Theorem~\ref{thm:Poisson}. Then a CLT for $H(n^{1/d} \cP_n)$ holds, that is, as $n \to \infty$,
\begin{align*}
	&\frac{\Var[H(n^{1/d}\cP_n)]}{n} \to  \int \hat \sigma^2(f(x))dx =: \sigma^2,\\
	&\frac{H(n^{1/d}\cP_n)  - \Ex[H(n^{1/d}\cP_n)]}{\sqrt{n}} \dto \Normal(0, \sigma^2).
\end{align*}
Here recall that $\hat\sigma^2(\lambda) = \lim_{n \to \infty} n^{-1}\Var[H(\cP(\lambda)|_{[1, n^{1/d})^d})]$ is the limiting variance in the homogeneous case.

We now use a de-Poissonization technique to derive a CLT for $H(n^{1/d} \fX_n)$. It turns out that we only need two more moments conditions. 
The first one requires that there is a constant $\beta > 0$ such that for any $m,n$
\[
	H(n^{1/d} \fX_m) \le \beta (m + n)^\beta, \text{almost surely}.
\] 
The second one requires  
\[
	\sup_{n \in \N} \sup_{m \in [(1-\eta)n, (1+\eta)n]} \Ex[|H(n^{1/d}\fX_{m + 1}) - H(n^{1/d}\fX_m)|^q] < \infty,
\]
for some $q> 2, \eta > 0$. When the two conditions are added, the following CLT for $H(n^{1/d}\fX_n)$ holds
\begin{align*}
	\frac{H(n^{1/d}\fX_n) - \Ex[H(n^{1/d}\fX_n)]}{\sqrt{n}} \dto \Normal(0, \tau^2) \text{ as } n \to \infty,
\end{align*}
where $\tau^2 = \sigma^2 - (  \int \Ex[\Delta(f(x))] f(x) dx)^2 \ge 0$. The convergence of variances also holds. Recall that $\Delta(\lambda)$ is the limiting add one cost on a homogeneous Poisson point process $\cP(\lambda)$. Moreover, if the distribution of $\Delta(\lambda)$ is nondegenerate for $\lambda \in A$, where $\Prob(X_1 \in A) > 0$, then $\tau^2 > 0$ and $\sigma^2 > 0$. 

The above derivation is just a direct application of Theorem~2.16 in \cite{Penrose-book}. A detailed discussion on this de-Poissonization technique can be found in Section 2.5 of the same book.

\section{CLT for Betti numbers}

For a finite set of points $\fX=\{x_1, \dots, x_n\}$ in $\R^d$, the \v{C}ech complex of radius $r>0$, denoted by $\cC(\fX, r)$, is defined as an abstract simplicial complex consisting of non-empty subsets of $\fX$ in the following way
\[
	\{x_{i_0}, \dots, x_{i_k}\} \in \cC(\fX, r) \Leftrightarrow \bigcap_{j = 0}^k B_r(x_{i_j}) \neq \emptyset.
\] 
The nerve theorem tells us that the abstract simplical complex $\cC(\fX, r)$ is homotopy equivalent to the union of balls 
\[
	U_r (\fX) = \bigcup_{i = 1}^n B_r(x_i).
\]
\v{C}ech complexes may be regarded as a generalization of geometric graphs. 

Denote by $\beta_k(\cC(\fX, r))$ the $k$th Betti number, or the rank of the $k$th homology group of $\cC(\fX, r)$, with coefficients from some underlying field. The limiting behavior of $\beta_k(\cC(\fX_n, r_n))$ has been study intensively, where $\{r_n\}$ is a deterministic sequence tending to zero. It is known that Betti numbers behave differently in three regimes divided according to the limit of $\{n^{1/d}r_n\}$: zero, finite, or infinite. Refer to a survey \cite{BK} for more details on this topic. Note that the zeroth Betti number $\beta_0(\cC(\fX, r))$ just counts the number of connected components in $U_r(\fX)$. Also $\beta_k(\cC(\fX, r)) = 0$, if $k \ge d$, as a consequence of the nerve theorem. 

We focus now on the thermodynamic regime, also called the critical regime, in which $n^{1/d}r_n \to r \in (0, \infty)$. Without loss of generality, we may assume that $n^{1/d}r_n = r$. Define a functional $H_r$ as 
\[
	H_r(\fX) = \beta_k(\cC(\fX, r)).
\] 
Then it is clear that in this regime $\beta_k(\cC(\fX_n, r_n)) = H_r(n^{1/d} \fX_n)$, which is exactly the scaling considered in this paper. 

The following results in the thermodynamical regime have been known.

(i) \emph{Homogeneous Poisson point processes.} The following strong law of large numbers (SLLN) and CLT hold \cite{Yoge-2017}. For $0\le k \le d-1$, as $n \to \infty$, 
	\begin{align*}
		&\frac{\beta_k(\cC(\cP(\lambda)|_{[0, n^{1/d})^d}, r))}{n} \to \hat \beta_k (\lambda, r), \text{almost surely,}\\
		&\frac{\beta_k(\cC(\cP(\lambda)|_{[0, n^{1/d})^d}, r)) - \Ex[\beta_k(\cC(\cP(\lambda)|_{[0, n^{1/d})^d}, r))]}{\sqrt n} \dto \Normal(0, \hat\sigma^2_k(\lambda, r)).
	\end{align*}
	Here $\hat \beta_k (\lambda, r)$ and $\hat\sigma^2_k(\lambda, r)$ are constants, $\hat \beta_k (\lambda, r) > 0$ and  $\hat\sigma^2_k(\lambda, r) > 0$, for $\lambda, r>0$. Note that the CLT follows from a general result in \cite{PY-2001} by showing that $H_r$ is weakly stabilizing on $\cP(\lambda)$. (Moments conditions for Betti numbers can be verified relatively easily.)
	 These results on Betti numbers are generalized to persistent Betti numbers in \cite{HST-2017}.

(ii) \emph{Binomial point processes.} The following SLLN holds. Assume that the probability density function $f$ is bounded and has compact support. Then as $n \to \infty$,
	\[
		\frac{\beta_k(\cC(n^{1/d} \fX_n, r))}{n} \to \int \hat \beta_k (f(x), r)dx, \text{almost surely.}
	\]
	A partial of this result in which some additional conditions on $f$ are required is a combination of \cite[Theorem~4.6]{Yoge-2017} and \cite[Theorem~1.3]{Trinh-2017}. In a forthcoming work \cite{Goel-2018}, we are able to remove such technical conditions.

	It is clear that $H_r$ is strongly stabilizing if almost surely, $U_r(\cP(\lambda))$ does not have infinite connected component because Betti numbers are additive on connected components. Let $r_c = r_c (d)$ be the critical radius for percolation of the occupied component
	\[
		r_c = \inf \{r : \Prob(U_r(\cP(1)) \text{ has infinite connected component}) > 0 \}.
	\]
It is known from the theory of continuum percolation theory that $0 < r_c < \infty$ \cite{Meester-Roy-book}. Thus for $r<r_c$, almost surely, $U_r(\cP(1))$ does not have infinite component. This implies the strong stabilization of $H_r$ on $\cP(1)$ when $r < r_c$.
By a scaling property of homogeneous Poisson point processes  ($\cP(\lambda)$ has the same distribution with $\lambda^{-1/d}\cP(1)$), it follows that $H_r$ is strongly stabilizing on $\cP(\lambda)$, if $r < \lambda^{-1/d} r_c$. Therefore, the following CLT for Betti numbers is an application of our general result. 
\begin{theorem} Let $f$ be a bounded probability density function with compact support. Let $\Lambda = \sup f(x)$. Then for $0 \le k \le d-1$, 
as $n \to \infty$ with $n^{1/d} r_n \to r \in (0, \Lambda^{-1/d} r_c)$,
\begin{align*}
	\frac{\beta_k(\cC(\cP_n, r_n)) - \Ex[\beta_k(\cC(\cP_n , r_n))]}{\sqrt n} &\dto \Normal(0, \sigma^2_k) , \quad  \sigma^2_k = \int \hat\sigma^2_k(f(x), r)dx, \\
	\frac{\beta_k(\cC(\fX_n, r_n)) - \Ex[\beta_k(\cC(\fX_n , r_n))]}{\sqrt n} &\dto \Normal(0, \tau^2_k), \quad \sigma^2_k > \tau^2_k > 0.
\end{align*}
\end{theorem}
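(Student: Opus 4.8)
The plan is to verify that the \v{C}ech–complex Betti functional $H_r$ satisfies all the hypotheses of the general theory developed in Sections~3.2 and 3.3 of this paper, so that the two CLTs drop out as immediate corollaries. The key point to establish is \emph{strong stabilization of $H_r$ on $\cP(\lambda)$ for every $\lambda \in [0, \Lambda]$}. Since we assume $r < \Lambda^{-1/d} r_c$, for each $\lambda \le \Lambda$ we have $r < \lambda^{-1/d} r_c$ (using $\lambda \le \Lambda$ and monotonicity), and by the scaling relation $\cP(\lambda) \overset{d}{=} \lambda^{-1/d}\cP(1)$ this is equivalent to the subcritical condition $\lambda^{1/d} r < r_c$ for $\cP(1)$. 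In the subcritical phase of continuum percolation, almost surely $U_r(\cP(\lambda))$ has no infinite connected component, and moreover (a standard fact from continuum percolation, e.g.\ \cite{Meester-Roy-book}) the connected component of $U_r(\cP(\lambda))$ containing the origin's ball, or any added ball $B_r(0)$, is almost surely bounded. The radius of stabilization $\tau(\lambda)$ is then taken large enough that the connected component of $U_r((\cP(\lambda)|_{B_{\tau(\lambda)}(0)}) \cup \{0\})$ containing $0$ is the same as the component of $0$ in $U_r(\cP(\lambda) \cup \{0\})$ and is contained in $B_{\tau(\lambda) - r}(0)$; because Betti numbers are additive over connected components, adding any further points $\cA$ outside $B_{\tau(\lambda)}(0)$ cannot change $D_0$, which is determined solely by that bounded component. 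This gives finiteness of $\tau(\lambda)$ and $\Delta(\lambda)$, hence strong stabilization.

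Next I would dispatch the moments conditions. The \emph{Poisson bounded moments condition}~\eqref{bdd-moments} and the \emph{locally bounded moments condition}~\eqref{locally} are easy for Betti numbers: the add-one cost $D_y(\tilde\cP_n|_W)$ is bounded in absolute value by a constant times the number of simplices of dimension $k$ or $k+1$ incident to the new point $y$, hence by a polynomial in the number of points of $\tilde\cP_n|_W$ within distance $2r$ of $y$; since $f$ is bounded, that point count is stochastically dominated by a Poisson random variable with bounded parameter, and Poisson random variables have all moments finite uniformly, so any $p > 2$ works. Likewise $H_r(\tilde\cP_n|_{y+W}) = \beta_k(\cC(\tilde\cP_n|_{y+W}, r))$ is at most the number of $k$-simplices, a polynomial in $\tilde\cP_n(y+W)$, again with uniformly bounded moments of all orders, giving~\eqref{locally}. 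For the de-Poissonization step I also need the two conditions in Section~3.3: the deterministic polynomial bound $H_r(n^{1/d}\fX_m) = \beta_k(\cC(n^{1/d}\fX_m, r)) \le \binom{m}{k+1} \le \beta(m+n)^\beta$ holds trivially since Betti numbers are bounded by simplex counts; and the increment moment bound $\sup_n \sup_{m \in [(1-\eta)n, (1+\eta)n]} \Ex[|H_r(n^{1/d}\fX_{m+1}) - H_r(n^{1/d}\fX_m)|^q] < \infty$ follows because the increment is again bounded by a polynomial in the number of the $m$ i.i.d.\ points landing within distance $2r n^{-1/d}$ of $X_{m+1}$, which for $m \asymp n$ is binomial with bounded mean and thus has uniformly bounded moments.

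With all hypotheses verified, Theorem~\ref{thm:Poisson} (applied with $H = H_r$, which is translation-invariant) gives the CLT for $H_r(n^{1/d}\cP_n) = \beta_k(\cC(\cP_n, r_n))$ with variance $\sigma_k^2 = \int \hat\sigma^2(f(x))\, dx = \int \hat\sigma_k^2(f(x), r)\, dx$, where $\hat\sigma_k^2(\lambda, r)$ is the homogeneous limiting variance from part~(i) of this section; and the de-Poissonization result (Theorem~2.16 of \cite{Penrose-book}, as invoked in Section~3.3) gives the CLT for $\beta_k(\cC(\fX_n, r_n))$ with $\tau_k^2 = \sigma_k^2 - (\int \Ex[\Delta(f(x))] f(x)\, dx)^2$. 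Finally, strict positivity $\sigma_k^2 > \tau_k^2$ and $\tau_k^2 > 0$: the gap $\sigma_k^2 - \tau_k^2 = (\int \Ex[\Delta(f(x))] f(x)\, dx)^2 > 0$ would need $\Ex[\Delta(\lambda)] \ne 0$ on a set of positive $f$-measure, but the cleaner route is to use the criterion already stated at the end of Section~3.3: if the distribution of $\Delta(\lambda)$ is nondegenerate for $\lambda$ in a set $A$ with $\Prob(X_1 \in A) > 0$ then both $\tau_k^2 > 0$ and $\sigma_k^2 > \tau_k^2$ (and $\sigma_k^2 > 0$). Here $A = \{f > 0\}$ works because for $\lambda > 0$ the add-one cost $\Delta(\lambda)$ for $k$th Betti numbers is genuinely random (with positive probability the new ball merges two components, changing $\beta_0$, or fills/creates a $k$-cycle), mirroring the positivity $\hat\sigma_k^2(\lambda, r) > 0$ recorded in part~(i); I would cite \cite{Yoge-2017} for this nondegeneracy. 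The main obstacle is pinning down the strong-stabilization argument cleanly — specifically, arguing that in the subcritical regime the component of the added point is almost surely bounded and that its diameter gives a valid finite stabilization radius uniformly in the configuration outside — but this is exactly the standard continuum-percolation input and requires no new idea.
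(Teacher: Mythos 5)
Your proposal follows essentially the same route as the paper: strong stabilization of $H_r$ on $\cP(\lambda)$ for all $\lambda\le\Lambda$ via subcritical continuum percolation ($r<\lambda^{-1/d}r_c$ by the scaling $\cP(\lambda)\overset{d}{=}\lambda^{-1/d}\cP(1)$) together with additivity of Betti numbers over connected components, simplex-count bounds for the various moment conditions, and then a direct appeal to Theorem~\ref{thm:Poisson} and the de-Poissonization of Section~3.3. The only place you are slightly more optimistic than the text is the strict inequality $\sigma_k^2>\tau_k^2$: the nondegeneracy criterion quoted at the end of Section~3.3 yields $\tau_k^2>0$ and $\sigma_k^2>0$ but does not by itself give $\int\Ex[\Delta(f(x))]f(x)\,dx\neq 0$, which is what $\sigma_k^2>\tau_k^2$ requires --- though the paper is equally terse on this point.
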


Note that $\beta_0$ is strongly stabilizing without any restriction on $r$ because the infinite component, when exists, is unique. As a consequence of the general result here, Theorem~13.26 and Theorem~13.27 in \cite{Penrose-book} still hold without the Riemann integrable assumption on $f$.

Note also that by a duality property, it was shown in \cite{Yoge-2017} that $H_r$ is strongly  stabilizing  on $\cP(1)$, if $r \notin I_d$,
where 
\[
	I_d = \begin{cases}
		(r_c, r_c^*],&\text{if } \Prob(U_{r_c}(\cP(1)) \text{ has infinite connected component}) = 0,\\
		[r_c, r_c^*], &\text{otherwise,}
	\end{cases}
\]
$r_c^*$ being the critical radius for percolation of the vacant component
\[
	r_c^* = \sup\{r : \Prob(\R^d \setminus U_{r}(\cP(1)) \text{ has infinite connected component}) > 0\}.
\]
In particular, $I_2 = \emptyset$, which implies that for $d=2$, $H_r$ is strongly stabilizing on $\cP(\lambda)$ for all $\lambda$. Thus in two dimensional case, there is no restriction on $r$.

\subsection*{Acknowledgment} 
The author would like to thank Professor Tomoyuki Shirai and Dr.~Kenkichi Tsunoda for many useful discussions.
This work is partially supported by JST CREST
Mathematics (15656429) and JSPS KAKENHI Grant Numbers JP16K17616.

\begin{footnotesize}
\bibliographystyle{spmpsci}
\bibliography{bpp-a}
\end{footnotesize}

\end{document}